\theoremstyle{change}  
\newtheorem{theorem}{Theorem}[section] 
\newtheorem{lemma}[theorem]{Lemma}  
\newtheorem{corollary}[theorem]{Corollary}
\newtheorem{notation}[theorem]{Notation}
\newenvironment{proof}{\noindent{\bf Proof}\ }{\qed\bigskip}
\renewcommand{\le}{\leqslant} 
\newcommand{\Aut}{\mathrm{Aut}}
\newcommand{\calF}{\mathcal{F}}
\newcommand{\calP}{\mathcal{P}}
\newcommand{\calQ}{\mathcal{Q}}
\newcommand{\catfont}{\mathsf}
\newcommand{\FF}{\mathbb{F}}
\newcommand{\id}{\mathrm{id}}
\newcommand{\Inn}{\mathrm{Inn}}
\newcommand{\lmod}[1]{\llap{\phantom{|}}_{#1}\catfont{mod}}
\newcommand{\lMod}[1]{\llap{\phantom{|}}_{#1}\catfont{Mod}}
\newcommand{\Out}{\mathrm{Out}}
\newcommand{\qed}{\nobreak\hfill
                  \vbox{\hrule\hbox{\vrule\hbox to 5pt
                  {\vbox to 8pt{\vfil}\hfil}\vrule}\hrule}}
\newcommand{\ZZ}{\mathbb{Z}}
\newcommand{\DD}{D^{\Delta}}
\newcommand{\TD}{T^{\Delta}}
\newcommand{\FFTD}{\mathbb{F}T^{\Delta}}
\newcommand{\RTD}{RT^{\Delta}}
\newcommand{\Proj}{\mathrm{Proj}}
\newcommand{\Fppk}[1]{\calF_{#1pp_k}^\Delta}
\newcommand{\Mult}{\mathrm{Mult}}
\newcommand{\Sym}{\mathrm{Sym}}
\newcommand{\Alt}{\mathrm{Alt}}
\title{On functorial equivalence classes of blocks of finite groups}
\author{Deniz Y\i lmaz}
\date{}
\providecommand{\keywords}[1]
{
  \small\smallskip\par	
  \hspace{2ex}\textbf{Keywords:} #1
}
\providecommand{\msc}[1]
{
  \small\smallskip\par	
  \hspace{2ex}\textbf{MSC2020:} #1
}
\begin{document}
\sloppy
\maketitle
\begin{abstract}
Let $k$ be an algebraically closed field of characteristic $p>0$ and let $\FF$ be an algebraically closed field of characteristic $0$. Recently, together with Bouc, we introduced the notion of functorial equivalences between blocks of finite groups and proved that given a $p$-group $D$, there is only a finite number of pairs $(G,b)$ of a finite group $G$ and a block $b$ of $kG$ with defect groups isomorphic to $D$, up to functorial equivalence over $\FF$. In this paper, we classify the functorial equivalence classes over $\FF$ of blocks with cyclic defect groups and $2$-blocks of defects $2$ and $3$. In particular, we prove that for all these blocks, the functorial equivalence classes depend only on the fusion system of the block.
\end{abstract}

\keywords{block, functorial equivalence, fusion system, splendid Rickard equivalence}
\msc{16S34, 20C20, 20J15.}

\section{Introduction}
Throughout of the paper, $k$ denotes an algebraically closed field of characteristic $p>0$ and $\FF$ denotes an algebraically closed field of characteristic zero. The local-global phenomena in modular representation theory of finite groups asserts that the global invariants of blocks are determined by the local invariants. There are many outstanding conjectures that revolves around this principle. One such conjecture is Puig's finiteness conjecture which asserts that given a finite $p$-group $D$, there are only finitely many pairs $(G,b)$ of a finite group $G$ and a block idempotent $b$ of $kG$ with defect group $D$, up to splendid Morita equivalence (Conjecture 6.4.2 in~\cite{linckelmann2018}). Splendidly Morita equivalent blocks have isomorphic source algebras, and hence Puig's conjecture, if true, means that all the global invariants of a block are determined by the defect group up to finitely many possibilities.

In \cite{BoucYilmaz2022}, together with Bouc, we introduced the notion of functorial equivalences over $\FF$ between blocks of finite groups, weaker than splendid Morita equivalence, and proved the following finiteness theorem.

\begin{theorem}\cite[Theorem~ 10.6]{BoucYilmaz2022}
Given a finite $p$-group $D$, there is only a finite number of pairs $(G,b)$, where $G$ is a finite group and $b$ is a block idempotent of $kG$ with defect group $D$, up to functorial equivalence over $\FF$.
\end{theorem}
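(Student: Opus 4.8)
The plan is to translate functorial equivalence into an isomorphism of modules over the diagonal $p$-permutation category, to use semisimplicity to replace that isomorphism by a numerical multiplicity function, and then to bound both the support and the values of this function in terms of $|D|$ alone. To a pair $(G,b)$ one attaches the diagonal $p$-permutation functor $\FFTD_{G,b}$, whose value at a finite group $L$ is (essentially) $\FF\otimes_{\ZZ}b\,\TD(G\times L)$, the Grothendieck group of $p$-permutation $kGb$-$kL$-bimodules with twisted diagonal vertices, the functoriality being given by composition of bimodules. By definition, $(G,b)$ and $(G',b')$ are functorially equivalent over $\FF$ precisely when $\FFTD_{G,b}\cong\FFTD_{G',b'}$ as $\FFTD$-modules, so it suffices to show that, as $(G,b)$ runs over the blocks with defect group $D$, only finitely many isomorphism classes of such modules arise.

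Since $\operatorname{char}\FF=0$, the category $\FFTD$ is semisimple, with simple functors $S_{L,u,V}$ indexed by a finite group $L$, an element $u\in L$ (up to the relevant equivalence), and a simple module $V$ over the associated outer automorphism group. Hence $\FFTD_{G,b}\cong\bigoplus_{(L,u,V)}S_{L,u,V}^{\,m_{L,u,V}(G,b)}$, and its isomorphism type is encoded by the multiplicity function $(L,u,V)\mapsto m_{L,u,V}(G,b)$. It therefore suffices to prove two bounds depending only on $D$: first, that there is a finite set $\calL$ of finite groups, depending only on $D$, with $m_{L,u,V}(G,b)\neq 0\Rightarrow L\in\calL$; second, that there is an integer $N=N(D)$ with $m_{L,u,V}(G,b)\le N$ for all $(L,u,V)$. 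Granting both, and since for each $L\in\calL$ only finitely many pairs $(u,V)$ occur, there are only finitely many possible multiplicity functions, hence only finitely many functorial equivalence classes of blocks with defect group $D$.

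The multiplicity bound is the easier of the two. For $L\in\calL$, since $S_{L,u,V}(L)\cong V$ and every simple functor not vanishing at $L$ has minimal group of order at most $|L|$, one gets $m_{L,u,V}(G,b)\le\dim_{\FF}\FFTD_{G,b}(L)=\operatorname{rk}_{\ZZ}\bigl(b\,\TD(G\times L)\bigr)$. An indecomposable $p$-permutation $kGb$-$kL$-bimodule with a twisted diagonal vertex $\Delta(P,\varphi)$ is determined up to isomorphism by that vertex --- whose $G$-component $P$ is conjugate into $D$ --- together with an indecomposable projective module, in the Brauer correspondent block, of the Brauer quotient at $\Delta(P,\varphi)$. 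With $|L|$ bounded there are only boundedly many vertices and $L$-blocks up to conjugacy, and the number of projective indecomposables of each Brauer correspondent block is at most its number of ordinary irreducible characters; that block has defect group a section of $G\times L$ of order dividing $|D|\cdot|L|$, so by the Brauer--Feit theorem this number is bounded in terms of $|D|$. Hence $\dim_{\FF}\FFTD_{G,b}(L)\le N(D)$.

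The heart of the matter, and the expected main obstacle, is the support bound: one must show that the simple constituents of $\FFTD_{G,b}$ are governed by the local structure of $(G,b)$. Concretely, if $S_{L,u,V}$ occurs in $\FFTD_{G,b}$ then, after passing to the minimal group of the corresponding summand, there must exist a $p$-permutation $kGb$-$kL$-bimodule with a twisted diagonal vertex $\Delta(P,\varphi)$ whose $G$-component $P$ is conjugate into $D$, and compatibility of the functorial structure with the Brauer construction forces $L$ to be an extension of a section of $D$ by a $p'$-group embedding into the outer automorphism group of that section --- in other words, $L$ is pinned down by the fusion system $\calF$ of $b$ on $D$ together with bounded $p'$-data, so $|L|$ is bounded in terms of $|D|$ and $\calL$ is finite. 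Making this rigorous --- identifying exactly which groups $L$ can be minimal groups of summands of $\FFTD_{G,b}$ through the interaction between vertices of bimodules and composition in $\FFTD$ --- is the real work. It can be approached directly through the Brauer construction on $p$-permutation bimodules, or by an induction on $|D|$ that splits off the contribution of $N_G(D,e_D)$ (a block with normal defect group $D$, of which there are boundedly many up to functorial equivalence) and reduces the remaining constituents, via deflation, to blocks with defect groups of strictly smaller order.
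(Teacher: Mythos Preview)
This theorem is not proved in the present paper: it is quoted verbatim from \cite[Theorem~10.6]{BoucYilmaz2022}, with no argument given here. So there is no proof in this paper to compare your proposal against.

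That said, your overall architecture---semisimplicity reduces the question to a finite multiplicity vector, whose support and values must be bounded in terms of $|D|$---is exactly the shape of the argument in \cite{BoucYilmaz2022}. Two points of divergence are worth flagging. First, your description of the parametrizing data is off: in a triple $(L,u,V)$, the group $L$ is a $p$-group and $u$ is a $p'$-element of $\Aut(L)$ acting faithfully (a $D^\Delta$-pair), not an element $u\in L$. Second, and more substantively, your support bound is both harder and vaguer than necessary. You conjecture that the minimal group $L$ is ``an extension of a section of $D$ by a $p'$-group embedding into the outer automorphism group of that section'' and call establishing this ``the real work''. But the multiplicity formula (Theorem~\ref{thm multiplicityformula} here, \cite[Theorem~8.22(b)]{BoucYilmaz2022}) makes the support bound immediate: a summand $S_{L,u,V}$ can occur only if $\calP_{(P,e_P)}(L,u)\neq\emptyset$ for some $P\in[\calF_b]$, which forces $L\cong P\le D$. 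So $L$ ranges over isomorphism classes of subgroups of $D$, a finite set, with no further analysis needed. Your proposed inductive splitting via $N_G(D,e_D)$ and deflation is unnecessary.

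Your multiplicity bound is closer to the mark: the same formula bounds $m_{L,u,V}(G,b)$ by a sum, over $P\in[\calF_b]$ and over a bounded orbit set, of $\dim_\FF\FF\Proj(ke_PC_G(P),u)\cdot\dim_\FF V$. Since $e_P$ has defect group $C_D(P)\le D$, Brauer--Feit (or the elementary bound $l(B)\le |D|^2$) controls the projective term, and $\dim_\FF V\le|\Out(L,u)|$ is bounded by $|\Aut(D)|$. This is essentially the argument in \cite{BoucYilmaz2022}.
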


To prove Puig's conjecture, it suffices to show that for a given $p$-group $D$ every functorial equivalence class of blocks with defect $D$ is a union of finitely many splendid Morita equivalence classes. Therefore, it is a natural question to classify the functorial equivalence classes of blocks with a given defect group $D$. In this paper, we start the program of classifying the functorial equivalence classes of blocks and consider the cases where $D\in \{C_{p^n}, V_4, Q_8,D_8, C_2\times C_2\times C_2, C_2\times C_4\}$. We summarize our result as follows. For a finite group $G$ we denote by $b_0(G)$ the principal block of $kG$.

\begin{theorem}\label{thm main}
Let $G$ be a finite group and let $b$ be a block idempotent of $kG$ with a defect group $D$.

\smallskip
{\rm (a)} The functorial equivalence classes over $\FF$ of blocks with cyclic defect groups depend only on the inertial quotient of the blocks. In particular,  for blocks with cyclic defect groups the functorial equivalence classes over $\FF$ coincide with the splendid Rickard equivalence classes.

\smallskip
{\rm (b)} If $D=V_4$, then the pair $(G,b)$ is functorially equivalent over $\FF$ to either $(V_4,1)$ or $(A_4,1)$.  In particular,  for blocks with Klein four defect groups the functorial equivalence classes over $\FF$ coincide with the splendid Rickard equivalence classes.

\smallskip
{\rm (c)} If $D=Q_8$, then the pair $(G,b)$ is functorially equivalent over $\FF$ to either $(Q_8,1)$ or $(SL(2,3),b_0(SL(2,3))$.

\smallskip
{\rm (d)} If $D=D_8$, then then the pair $(G,b)$ is functorially equivalent over $\FF$ to $(D_8,1)$, $(S_4,b_0(S_4))$ or $(PSL(3,2),b_0(PSL(3,2))$.

\smallskip
{\rm (e)} If $D=C_2\times C_2\times C_2$, then the pair $(G,b)$ is functorially equivalent over $\FF$ to $(C_2\times C_2\times C_2,1)$, $(A_4\times C_2,1)$, $(SL_2(8),b_0(SL_2(8)))$ or $(J_1,b_0(J_1))$. In particular, for bloks with defect groups $C_2\times C_2\times C_2$ the functorial equivalence classes over $\FF$ coincide with the isotypy classes.

\smallskip
{\rm (f)} If $D=C_2\times C_4$, then the pair $(G,b)$ is functorially equivalent over $\FF$ to $(C_2\times C_4,1)$.
\end{theorem}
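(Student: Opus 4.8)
The strategy for the entire theorem is the same across parts (a)--(f): for each allowed defect group $D$, one wants to show that the functorial equivalence class of a block $(G,b)$ with defect group $D$ is determined by its fusion system (equivalently, in the cyclic case, by the inertial quotient, and in the elementary abelian rank $3$ case, by the isotypy class). The plan for part (f) specifically is as follows. First I would recall from \cite{BoucYilmaz2022} the structure theory of functorial equivalences: two blocks $(G,b)$ and $(H,c)$ are functorially equivalent over $\FF$ precisely when the associated diagonal $p$-permutation functors $\FF T^\Delta_{kGb}$ and $\FF T^\Delta_{kHc}$ are isomorphic, and the key computational handle is that the evaluation of such a functor at a finite group carries an action encoding the block's local structure. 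In particular the finiteness theorem (Theorem 1.1) is proved by bounding these evaluations, and the same machinery gives, for a fixed $D$, an explicit finite list of candidate classes.

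Next I would invoke the classification of blocks with defect group $C_2\times C_4$. By work of Eaton--Eisele--Livesey (and the general theory of blocks with abelian defect group of small rank), any block with defect group $C_2\times C_4$ is splendidly Morita equivalent to its Brauer correspondent, i.e.\ to the group algebra $k[C_2\times C_4]$ itself, since the inertial quotient must be trivial: $\Aut(C_2\times C_4)$ is a $2$-group, so the only $p'$-automorphism acting is the identity, forcing the fusion system to be nilpotent, and by Puig's theorem on nilpotent blocks the source algebra is a matrix algebra over $k[C_2\times C_4]$. A splendid Morita equivalence induces a splendid Rickard equivalence, which in turn induces a functorial equivalence over $\FF$ by \cite[the relevant functoriality statement in]{BoucYilmaz2022}. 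Hence every block with defect group $C_2\times C_4$ is functorially equivalent over $\FF$ to $(C_2\times C_4,1)$.

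The one point that needs care --- and is really the crux of parts (b)--(f) rather than (f) --- is verifying that the candidate representatives are pairwise \emph{non}-equivalent, which is not needed in (f) since there is only one class, but in (f) one still must confirm that the list is complete, i.e.\ that no block with this defect group has nontrivial fusion. The main obstacle in the harder parts (c), (d), (e) is distinguishing the listed classes: one must compute the relevant invariants extracted from the diagonal $p$-permutation functor (the ``species'' or the evaluations at the defect group and its subgroups, together with the fusion action) and show they differ; for (e) this is exactly the statement that the four classes are distinguished by their isotypy type, which requires the deeper input that functorial equivalence implies isotypy, established via the character-theoretic content of $\FF T^\Delta$. For part (f), by contrast, the argument is short precisely because the nilpotency of all such blocks collapses the classification to a single point; the only thing to check is that the reduction to nilpotent blocks is available in the literature for $D = C_2\times C_4$, which it is.
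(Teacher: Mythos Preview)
Your treatment of part (f) is correct and matches the paper exactly: since $\Aut(C_2\times C_4)$ is a $2$-group, any such block is nilpotent, hence functorially equivalent over $\FF$ to $(C_2\times C_4,1)$ by \cite[Theorem~9.2]{BoucYilmaz2022}. (The reference to Eaton--Eisele--Livesey is unnecessary.)

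For parts (a)--(e), however, your sketch has real gaps. The paper's central device, which you never name, is the multiplicity formula (Theorem~\ref{thm multiplicityformula}, from \cite[Theorem~8.22]{BoucYilmaz2022}) giving the decomposition of $\FFTD_{G,b}$ into simple functors $S_{L,u,V}$. For (a), (c), (d) the paper computes every such multiplicity directly from this formula and shows, using the values of $l(kC_G(P)e_P)$ supplied by the classical results of Brauer and Olsson, that the answer depends only on $\calF_b$; this is how one proves that two non-nilpotent blocks with the same fusion system on $Q_8$ or $D_8$ are functorially equivalent, a step for which no external splendid-equivalence classification is available to cite and which your sketch does not address. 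For (e) your logic is inverted: you claim that separating the four classes ``requires the deeper input that functorial equivalence implies isotypy'', but that implication is neither proved in the references nor used. The paper instead uses \cite{KessarKoshitaniLinckelmann2012} to obtain an isotypy with the Brauer correspondent, then \cite{Yilmaz2022} (isotypy $\Rightarrow$ functorial equivalence, not the converse) to reduce to $D\rtimes E$; the four classes are then separated by the elementary invariant $l(b)=\Mult(S_{1,1,\FF},\FFTD_{G,b})$, which takes distinct values for $|E|\in\{1,3,7,21\}$. Similarly for (b) the paper cites the splendid Morita classification of \cite{CEKL12} together with Rickard's equivalence between $kA_4$ and $kA_5b_0(A_5)$, rather than arguing abstractly.
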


Theorem~\ref{thm main} follows from the more precise Theorems~\ref{thm cyclic}, \ref{thm Kleinfour}, \ref{thm quaternion}, \ref{thm dihedral}, \ref{thm elementaryabelian} and \ref{thm abelinoforder8}. The following corollary is immediate from Theorem~\ref{thm main}.

\begin{corollary}
Functorial equivalence classes over $\FF$ of blocks of finite groups with defect groups $D\in  \{C_{p^n}, V_4, Q_8,D_8, C_2\times C_2\times C_2, C_2\times C_4\}$ depend only on the fusion system of the blocks.
\end{corollary}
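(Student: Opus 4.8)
The plan is to read the corollary off Theorem~\ref{thm main} by going through the six possibilities for $D$ one at a time. For each such $D$, Theorem~\ref{thm main} --- via the precise Theorems~\ref{thm cyclic}, \ref{thm Kleinfour}, \ref{thm quaternion}, \ref{thm dihedral}, \ref{thm elementaryabelian}, \ref{thm abelinoforder8} --- produces a finite set of representative pairs $(G_i,b_i)$ such that every block with defect group $D$ is functorially equivalent over $\FF$ to exactly one of them. The corollary is equivalent to the assertion that the index $i$ is a function of the fusion system of the block, so I would establish two points: (i) the representatives attached to a fixed $D$ have pairwise non-isomorphic fusion systems; and (ii) an arbitrary block $(G,b)$ with defect group $D$ is functorially equivalent to the representative whose fusion system is isomorphic to the fusion system $\calF(G,b)$ of $(G,b)$.

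For (i) I would simply tabulate the fusion systems. When $D$ is abelian --- the cases $C_{p^n}$, $V_4$, $C_2\times C_2\times C_2$ and $C_2\times C_4$ --- the fusion system of a block is controlled by the normaliser of a maximal Brauer pair, hence is determined by the inertial quotient together with its action on $D$, so it suffices to note that the representatives have pairwise non-isomorphic inertial quotients. For $C_{p^n}$ this is part~(a) itself; for $C_2\times C_2\times C_2$ the four representatives have inertial quotients $1$, $C_3$, $C_7$ and $C_7\rtimes C_3$; and for $C_2\times C_4$ the automorphism group is a $2$-group, so every block is nilpotent, and there is one fusion system and one representative. When $D$ is non-abelian --- $Q_8$ and $D_8$ --- I would argue from the classification of saturated fusion systems on these groups: for $Q_8$ the representatives $(Q_8,1)$ and $(SL(2,3),b_0)$ carry the nilpotent fusion system and a non-nilpotent one, and for $D_8$ the representatives $(D_8,1)$, $(S_4,b_0)$ and $(PSL(3,2),b_0)$ are distinguished by having, respectively, zero, one and two $\calF$-essential subgroups up to conjugacy.

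Point (ii) is where the real content lies, and it is already inside the proofs of the precise theorems: in each case one first identifies the fusion system of the given block $(G,b)$, using the known classification of blocks whose defect group is of the relevant isomorphism type, and then exhibits a functorial equivalence --- often coming from a splendid Morita or splendid Rickard equivalence, or from the isotypy data --- to the representative carrying exactly that fusion system. Reading those arguments with this in mind upgrades Theorem~\ref{thm main} to the statement that, for each $D$ in the list, the assignment $(G,b)\mapsto\calF(G,b)$ induces a bijection between the functorial equivalence classes of blocks with defect group $D$ and the saturated fusion systems on $D$ realised by some block; the corollary is precisely this bijection. One also uses here that the remaining saturated fusion systems on $D$ --- for instance the one on $Q_8$ with automiser $S_3$ --- are not realised by any block.

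The main obstacle is point~(ii), and it is conceptual rather than computational: a priori, functorial equivalence over $\FF$ is only known to preserve coarser invariants of a block --- the isomorphism type of the defect group, the numbers of ordinary and modular irreducible characters, and the like --- and it is not asserted here that it preserves the fusion system in general. So the corollary is not a formal consequence of Theorem~\ref{thm main} together with the bare fact that the listed representatives have distinct fusion systems; one genuinely needs the bookkeeping from the case analysis that places each block in the functorial equivalence class of the representative realising its fusion system, after which part~(i) serves only to guarantee that this assignment has no collisions.
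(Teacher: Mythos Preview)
Your plan is correct and matches the paper's approach: the paper offers no separate proof of this corollary at all, simply declaring it ``immediate from Theorem~\ref{thm main}'', and what you have written is exactly the unpacking of that remark --- the case analyses in Theorems~\ref{thm cyclic}--\ref{thm abelinoforder8} are already organised by fusion system, so reading them with this in mind yields the corollary.

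One small correction to your aside at the end: there is no saturated fusion system on $Q_8$ with $\Out_{\calF}(Q_8)\cong S_3$, since saturation forces $\Out_{\calF}(D)$ to be a $p'$-group and here $p=2$; the only saturated fusion systems on $Q_8$ are the two carried by your representatives. In fact, for every $D$ on the list, each saturated fusion system on $D$ is realised by one of the representative blocks, so the clause about ``remaining saturated fusion systems not realised by any block'' is never needed --- and in any case the corollary only asserts that the functorial class is a function of the fusion system, which does not require anything about unrealised fusion systems.
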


We also find the composition factors of the diagonal $p$-permutation functors arising from all these blocks except when $D=C_2\times C_2\times C_2$.

In Section~\ref{sec prelim} we recall diagonal $p$-permutation functors and functorial equivalences of blocks. We consider blocks with cyclic defect groups in Section~\ref{sec cyclic}, with Klein four defect groups in Section~\ref{sec Kleinfour}, with $Q_8$ defect groups in Section~\ref{sec quaternion}, with $D_8$ defect groups in Section~\ref{sec dihedral}, with $C_2\times C_2\times C_2$ defect groups in Section~\ref{sec elementaryabelian} and with $C_2\times C_4$ defect groups in Section~\ref{sec otherabelian}.

\section{Preliminaries}\label{sec prelim}
{\rm (a)} Let $(P,s)$ be a pair where $P$ is a $p$-group and $s$ is a generator of a $p'$-group acting on $P$. We write $P\langle s\rangle := P\rtimes \langle s\rangle$ for the corresponding semi-direct product. We say that two pairs $(P,s)$ and $(Q,t)$ are {\em isomorphic} and write $(P,s)\cong (Q,t)$, if there is a group isomorphism $f: P\langle s\rangle \to Q\langle t\rangle$ that sends $s$ to a conjugate of $t$. We set $\Aut(P,s)$ to be the group of automorphisms of the pair $(P,s)$ and $\Out(P,s)=\Aut(P,s)/\Inn(P\langle s\rangle)$. Recall from \cite{BoucYilmaz2020} that a pair $(P,s)$ is called a {\em $\DD$-pair}, if $C_{\langle s\rangle}(P)=1$. See also \cite[Lemma~6.10]{BoucYilmaz2022}.

\smallskip
\noindent{\rm (b)} Let $G, H$ and $K$ be finite groups. We call a $(kG,kH)$-bimodule $M$ a \textit{diagonal $p$-permutation bimodule}, if $M$ is a $p$-permutation $(kG,kH)$-bimodule whose indecomposable direct summands have twisted diagonal vertices as subgroups of $G\times H$, or equivalently, if $M$ is a $p$-permutation $(kG,kH)$-bimodule which is projective both as a left $kG$-module and as a right $kH$-module. We denote by $\TD(kG,kH)$ the Grothendieck group of diagonal $p$-permutation $(kG,kH)$-bimodules. For a commutative ring $R$, we also set $R\TD(kG,kH):=R\otimes_{\ZZ}T^{\Delta}(kG,kH)$. If $b$ is a block idempotent of $kG$ and $c$ a block idempotent of $kH$, then we define $\TD(kGb,kHc)$ and $R\TD(kGb,kHc)$ similarly. 

\smallskip
If $M$ is a diagonal $p$-permutation $(kG,kH)$-bimodule and $N$ a diagonal $p$-permutation $(kH,kK)$-bimodule, then the tensor product $M\otimes_{kH} N$ is a diagonal $p$-permutation $(kG,kK)$-bimodule. This induces an $R$-linear map
\begin{align*}
\cdot_H: R\TD(kG,kH)\times R\TD(kH,kK)\to R\TD(kG,kK)\,.
\end{align*}

\smallskip
\noindent{\rm (c)} Let $Rpp_k^\Delta$ denote the following category:
\begin{itemize}
\item objects: finite groups.
\item $\mathrm{Mor}_{Rpp_k^\Delta}(G,H) = RT^{\Delta}(kH,kG)$.
\item composition is induced from the tensor product of bimodules.
\item $\mathrm{Id}_{G}=[kG]$.
\end{itemize} 
An $R$-linear functor from $Rpp_k^\Delta$ to $\lMod{R}$ is called a \textit{diagonal $p$-permutation functor} over $R$. Together with natural transformations, diagonal $p$-permutation functors form an abelian category $\Fppk{R}$.

\smallskip
\noindent{\rm (d)} Let $G$ be a finite group and $b$ a block idempotent of $kG$.  Recall from \cite{BoucYilmaz2022} that the block diagonal $p$-permutation functor $R\TD_{G,b}$ is defined as
\begin{align*}
R\TD_{G,b}: R pp_k^{\Delta}&\to \lMod{R}\\
H&\mapsto R T^\Delta(kH,kG)\otimes_{kG} kGb\,.
\end{align*}

If $H$ is another finite group and if $c$ is a block idempotent of $kH$, we say that the pairs $(G,b)$ and $(H,c)$ are {\em functorially equivalent} over $R$, if the corresponding diagonal $p$-permutation functors $R\TD_{G,b}$ and $R\TD_{H,c}$ are isomorphic in $\Fppk{R}$ (\cite[Definition~10.1]{BoucYilmaz2022}).  By \cite[Lemma~10.2]{BoucYilmaz2022} the pairs $(G,b)$ and $(H,c)$ are functorially equivalent over~$R$ if and only if there exist $\omega\in R\TD(kGb,kHc)$ and $\sigma\in R\TD(kHc,kGb)$ such that
\begin{align*}
\omega \cdot_G \sigma = [kGb] \quad \text{in} \quad \RTD(kGb,kGb) \quad \text{and} \quad \sigma \cdot_H \omega = [kHc] \quad \text{in} \quad \RTD(kHc,kHc) \,.
\end{align*}
Note that this implies that a $p$-permutation equivalence between blocks implies a functorial equivalence over $\ZZ$ and hence a functorial equivalence over $R$, for any $R$.

\smallskip
\noindent{\rm (e)} Recall from \cite{BoucYilmaz2022} that the category $\Fppk{\FF}$ is semisimple. Moreover, the simple diagonal $p$-permutation functors $S_{L,u,V}$, up to isomorphism, are parametrized by the isomorphism classes of triples $(L,u,V)$ where $(L,u)$ is a $\DD$-pair, and $V$ is a simple $\FF\Out(L,u)$-module (see \cite[Sections~6~and~7]{BoucYilmaz2022} for more details on simple functors).

\smallskip
\noindent{\rm (f)} Since the category $\Fppk{\FF}$ is semisimple, the functor $\FFTD_{G,b}$ is a direct sum of simple diagonal $p$-permutation functors $S_{L,u,V}$.  Hence two pairs $(G,b)$ and $(H,c)$ are functorially equivalent over $\FF$ if and only if for any triple $(L,u,V)$, the multiplicities of the simple diagonal $p$-permutation functor $S_{L,u,V}$ in $\FFTD_{G,b}$ and $\FFTD_{H,c}$ are the same.  We now recall the formula for the multiplicity of $S_{L,u,V}$ in $\FFTD_{G,b}$.  See \cite[Section~8]{BoucYilmaz2022} for more details. 

Let $(D,e_D)$ be a maximal $kGb$-Brauer pair.  For any subgroup $P\le D$, let $e_P$ be the unique block idempotent of $kC_G(P)$ with $(P,e_P)\le (D,e_D)$ (see, for instance,  \cite[Section~6.3]{linckelmann2018} for more details on Brauer pairs). Let also $\calF_b$ be the fusion system of $kGb$ with respect to $(D,e_D)$ and let $[\calF_b]$ be a set of isomorphism classes of objects in $\calF_b$.

For $P\in \calF_b$, we set $\calP_{(P,e_P)}(L,u)$ to be the set of group isomorphisms $\pi:L\to P$ with $\pi i_u \pi^{-1}\in \Aut_{\calF_b}(P)$. The set $\calP_{(P,e_P)}(L,u)$ is an $(N_G(P,e_P), \Aut(L,u))$-biset via
\begin{align*}
g\cdot \pi\cdot\varphi = i_g\pi\varphi
\end{align*}
for $g\in N_G(P,e_P)$, $\pi\in \calP_{(P,e_P)}(L,u)$ and $\varphi\in\Aut(L,u)$. We denote by $[\calP_{(P,e_P)}(L,u)]$ a set of representatives of $N_G(P,e_P)\times \Aut(L,u)$-orbits of $\calP_{(P,e_P)}(L,u)$. 

For $\pi \in [\calP_{(P,e_P)}(L,u)]$, the stabilizer in $\Aut(L,u)$ of the $N_G(P,e_P)$-orbit of $\pi$ is denoted by $\Aut(L,u)_{\overline{(P,e_P,\pi)}}$. One has
\begin{align*}
\Aut(L,u)_{\overline{(P,e_P,\pi)}}=\{\varphi\in \Aut(L,u)\mid \pi\varphi \pi^{-1}\in \Aut_{\calF_b}(P)\}\,.
\end{align*}

\begin{theorem}\cite[Theorem~8.22(b)]{BoucYilmaz2022}\label{thm multiplicityformula}
The multiplicity of a simple diagonal $p$-permutation functor $S_{L,u,V}$ in the functor $\FFTD_{G,b}$ is equal to the $\FF$-dimension of
\begin{align*}
\bigoplus_{P \in [\calF_b]} \bigoplus_{\pi \in [\calP_{(P,e_P)}(L,u)]} \FF\Proj(ke_PC_G(P),u)\otimes_{\Aut(L,u)_{\overline{(P,e_P,\pi)}}} V\,.
\end{align*}
\end{theorem}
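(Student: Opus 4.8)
The plan is to read off the multiplicity of $S_{L,u,V}$ in $\FFTD_{G,b}$ by ``evaluating at the $\DD$-pair $(L,u)$'', which is the minimal $\DD$-pair of the simple functor $S_{L,u,V}$, and then to compute that evaluation explicitly in terms of Brauer pairs. First I would recall from \cite{BoucYilmaz2022} that for any $F\in\Fppk{\FF}$ the finite-dimensional $\FF$-algebra $\FF T^\Delta(kL,kL)=\End_{Rpp_k^\Delta}(L)$ acts on the evaluation $F(L)$, and that there is an idempotent $\epsilon_{L,u}\in\FF T^\Delta(kL,kL)$ localizing the algebra at the $\DD$-pair $(L,u)$ (pinned down by its Brauer constructions, as the part supported on twisted diagonal subgroups of $L\times L$ of full $p$-part $L$ with $p'$-part conjugate to $u$) with the following properties. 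The assignment $F\mapsto\epsilon_{L,u}F(L)$ defines an exact functor $\Phi_{L,u}$ from $\Fppk{\FF}$ to finite-dimensional modules over $\epsilon_{L,u}\FF T^\Delta(kL,kL)\epsilon_{L,u}$, and this algebra is isomorphic to the group algebra $\FF\Out(L,u)$; moreover $\Phi_{L,u}(S_{M,v,W})=0$ whenever $(M,v)\not\cong(L,u)$, while $\Phi_{L,u}(S_{L,u,W})\cong W$ for every simple $\FF\Out(L,u)$-module $W$. The crucial point is this identification of the ``essential algebra at $(L,u)$'': it is precisely here that the $\DD$-pair hypothesis $C_{\langle u\rangle}(L)=1$ is used, forcing the potential contributions of proper subgroups of $L$ and of non-faithful $p'$-parts to $\epsilon_{L,u}\FF T^\Delta(kL,kL)\epsilon_{L,u}$ to vanish. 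Granting all this, semisimplicity of $\Fppk{\FF}$ immediately yields that the multiplicity of $S_{L,u,V}$ in $\FFTD_{G,b}$ equals the multiplicity of $V$ in the $\FF\Out(L,u)$-module $\Phi_{L,u}(\FFTD_{G,b})=\epsilon_{L,u}\bigl(\FF T^\Delta(kL,kG)\otimes_{kG}kGb\bigr)=\epsilon_{L,u}\,\FF T^\Delta(kL,kGb)$.

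Next I would compute $\epsilon_{L,u}\,\FF T^\Delta(kL,kGb)$ directly. This space is spanned by the classes of indecomposable diagonal $p$-permutation $(kL,kGb)$-bimodules, whose vertices are twisted diagonal subgroups of $L\times G$. Applying $\epsilon_{L,u}$ retains exactly those classes for which the Brauer construction along a twisted diagonal copy of $L$ — with full $p$-part $L$ on the left and $p'$-part $u$ — is nonzero; this pins down a subgroup of $G$ isomorphic to $L$, which the block condition forces to be $G$-conjugate into the defect group $D$ of the fixed maximal $kGb$-Brauer pair $(D,e_D)$. Passing to $\calF_b$-representatives, such subgroups are the $P\in[\calF_b]$ with $P\cong L$, and the remaining datum — an isomorphism $L\to P$ carrying $i_u$ into $\Aut_{\calF_b}(P)$ — is precisely an element $\pi\in\calP_{(P,e_P)}(L,u)$. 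Here one uses the decomposition of the Brauer construction of $kGb$ at $P$ as $ke_PC_G(P)\oplus(\text{other blocks of }kC_G(P))$, together with the classical dictionary between vertices of indecomposable summands of $p$-permutation $kGb$-bimodules and $\calF_b$-Brauer pairs (Brou\'e, Puig; the diagonal version is developed in \cite{BoucYilmaz2022}). The local contribution attached to a pair $(P,\pi)$ is the Grothendieck group $\FF\Proj(ke_PC_G(P),u)$ of projective $ke_PC_G(P)$-modules twisted by (a $p'$-element of $N_G(P,e_P)$ realizing) $\pi i_u\pi^{-1}$ — the $u$-equivariant projectives over $ke_PC_G(P)$.

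It remains to organize this with respect to the two group actions. The $N_G(P,e_P)$-action on $\calP_{(P,e_P)}(L,u)$ identifies the local contributions of $N_G(P,e_P)$-conjugate isomorphisms, whereas the $\Aut(L,u)$-action is what turns $\bigoplus_{P}\bigoplus_{\pi}\FF\Proj(ke_PC_G(P),u)$ into a module over $\epsilon_{L,u}\FF T^\Delta(kL,kL)\epsilon_{L,u}=\FF\Out(L,u)$ (the inner automorphisms of $L\langle u\rangle$ acting trivially on the local Grothendieck groups). Decomposing the $\bigl(N_G(P,e_P),\Aut(L,u)\bigr)$-biset $\calP_{(P,e_P)}(L,u)$ into orbits with representatives $\pi\in[\calP_{(P,e_P)}(L,u)]$ and point stabilizers $\Aut(L,u)_{\overline{(P,e_P,\pi)}}$, one then obtains an isomorphism of $\FF\Out(L,u)$-modules
\begin{align*}
\Phi_{L,u}(\FFTD_{G,b})\ \cong\ \bigoplus_{P\in[\calF_b]}\ \bigoplus_{\pi\in[\calP_{(P,e_P)}(L,u)]}\ \FF\Proj(ke_PC_G(P),u)\ \otimes_{\Aut(L,u)_{\overline{(P,e_P,\pi)}}}\ \FF\Out(L,u)\,,
\end{align*}
a direct sum of modules induced from the local contributions. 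Taking the multiplicity of $V$ on both sides, Frobenius reciprocity together with the semisimplicity and self-duality of the finite group algebra $\FF\Out(L,u)$ converts this multiplicity into $\sum_{P}\sum_{\pi}\dim_\FF\bigl(\FF\Proj(ke_PC_G(P),u)\otimes_{\Aut(L,u)_{\overline{(P,e_P,\pi)}}}V\bigr)$, which is the asserted formula. The main obstacle is the first step: constructing $\Phi_{L,u}$ and, above all, proving the identification $\epsilon_{L,u}\FF T^\Delta(kL,kL)\epsilon_{L,u}\cong\FF\Out(L,u)$ together with the separation property on simple functors; tied to it is the precise matching in the second step between Brauer constructions of diagonal $p$-permutation $kGb$-bimodules and the $\calF_b$-Brauer-pair combinatorics, ensuring that the $u$-twisted projectives of $ke_PC_G(P)$ appear with exactly the stabilizer $\Aut(L,u)_{\overline{(P,e_P,\pi)}}$ acting. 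Once these structural inputs are in place, the remaining orbit count is routine.
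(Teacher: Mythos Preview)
The paper does not contain a proof of this statement: it is quoted verbatim as \cite[Theorem~8.22(b)]{BoucYilmaz2022} and used as a black box throughout. There is therefore nothing in this paper to compare your proposal against.

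That said, your outline is broadly faithful to the strategy actually carried out in \cite{BoucYilmaz2022}: one does localize at the $\DD$-pair $(L,u)$ via an idempotent in the endomorphism algebra of $L$, one does identify the resulting essential algebra with $\FF\Out(L,u)$, and one does compute the evaluation of $\FFTD_{G,b}$ at $(L,u)$ by tracking Brauer constructions of diagonal $p$-permutation $(kL,kGb)$-bimodules through the $\calF_b$-Brauer-pair combinatorics, arriving at an induced-module description over $\FF\Out(L,u)$ and then applying Frobenius reciprocity. The points you flag as the real work---the construction and properties of $\epsilon_{L,u}$, the essential-algebra identification, and the precise matching of local pieces with stabilizers $\Aut(L,u)_{\overline{(P,e_P,\pi)}}$---are indeed where the substance lies in that reference; your sketch correctly identifies them but, as you acknowledge, does not discharge them. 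If your intent was to reproduce the proof here, you would need to supply those structural inputs; if your intent was only to explain why the formula is plausible, the sketch is adequate, but in either case it is not something the present paper attempts.
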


Let $G$ be a finite group. We denote by $\calQ_{G,p}$ the set of pairs $(P,s)$ where $P$ is a $p$-subgroup of $G$ and $s$ is a $p'$-element of $N_G(P)$. The group $G$ acts on $\calQ_{G,p}$ via conjugation and we denote by $[\calQ_{G,p}]$ a set of representatives of the $G$-orbits on $\calQ_{G,p}$.  

If $(P,s)\in \calQ_{G,p}$, then the pair $(\tilde{P},\tilde{s}):=(PC_{\langle s\rangle}(P)/C_{\langle s\rangle}(P), sC_{\langle s\rangle}(P))$ is a $\DD$-pair. Suppose that $(L,u)$ is another $\DD$-pair isomorphic to $(\tilde{P},\tilde{s})$. Then the isomorphism between the pairs induces a group homomorphism from $N_G(P,s)$ to $\Out(L,u)$, see \cite[Section~7]{BoucYilmaz2022}. So, a simple $\FF\Out(L,u)$-module $V$ can be viewed as an $\FF N_G(P,s)$-module via this homomorphism.

\begin{theorem}\cite[Corollary~7.4]{BoucYilmaz2022}\label{thm multiplicityinrepresentable}
The multiplicity of a simple diagonal $p$-permutation functor $S_{L,u,V}$ in the representable functor $\FFTD_{G}$ is equal to the $\FF$-dimension of
\begin{align*}
\bigoplus_{\substack{(P,s)\in[\mathcal{Q}_{G,p}]\\ (\tilde{P},\tilde{s})\cong (L,u)}} V^{N_G(P,s)}\,.
\end{align*}
\end{theorem}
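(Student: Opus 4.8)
The plan is to compute, via Yoneda's lemma, the multiplicity of $S_{L,u,V}$ in $\FFTD_G$ as the $\FF$-dimension of the evaluation $S_{L,u,V}(G)$, and then to identify this evaluation with the asserted direct sum. Since $\FFTD_G$ is the $\FF$-linearization of the representable functor $\mathrm{Mor}_{\FF pp_k^\Delta}(G,-)$, Yoneda's lemma gives a natural isomorphism $\Hom(\FFTD_G,F)\cong F(G)$ in $\Fppk{\FF}$ for every diagonal $p$-permutation functor $F$. As $\Fppk{\FF}$ is semisimple, $\FFTD_G$ is finitely generated (by $[kG]\in\FFTD_G(G)$), and $\FF$ is algebraically closed so that $\End(S_{L,u,V})=\FF$, the multiplicity of $S_{L,u,V}$ in $\FFTD_G$ equals $\dim_\FF\Hom(\FFTD_G,S_{L,u,V})=\dim_\FF S_{L,u,V}(G)$. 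Hence it suffices to produce an $\FF$-linear isomorphism
\begin{align*}
S_{L,u,V}(G)\;\cong\;\bigoplus_{\substack{(P,s)\in[\calQ_{G,p}]\\ (\tilde{P},\tilde{s})\cong (L,u)}} V^{N_G(P,s)}\,,
\end{align*}
where $N_G(P,s)$ acts on $V$ through the homomorphism $N_G(P,s)\to\Out(L,u)$.

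To obtain this isomorphism I would pass to the minimal group $\Gamma:=L\rtimes\langle u\rangle$ of $S_{L,u,V}$: the functor $S_{L,u,V}$ is the simple summand of the representable functor $\FFTD_\Gamma$ cut out by the primitive idempotent $e$ of the semisimple algebra $\End(\FFTD_\Gamma)\cong\FF T^\Delta(k\Gamma,k\Gamma)$ lying in the part attached to $(L,u)$ and affording the simple $\FF\Out(L,u)$-module $V$; consequently $S_{L,u,V}(G)\cong\FF T^\Delta(kG,k\Gamma)\cdot_\Gamma e$ as $\FF$-vector spaces. The task thus becomes the analysis of $\FF T^\Delta(kG,k\Gamma)$ as a right $\FF T^\Delta(k\Gamma,k\Gamma)$-module, for which I would invoke the structure theory of diagonal $p$-permutation bimodules of \cite{BoucYilmaz2020}: a $\ZZ$-basis of $T^\Delta(kG,k\Gamma)$ is given by the indecomposable diagonal $p$-permutation $(kG,k\Gamma)$-bimodules, whose vertices are twisted diagonal subgroups of $G\times\Gamma$, and over $\FF$ this organizes $\FF T^\Delta(kG,k\Gamma)$ into components indexed by $(G\times\Gamma)$-conjugacy classes of such twisted diagonal subgroups equipped with a compatible $p'$-element. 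Specializing to $G=\Gamma$ identifies the algebra $\FF T^\Delta(k\Gamma,k\Gamma)$ and locates the idempotent $e$; feeding a general $G$ back in, the summand $\FF T^\Delta(kG,k\Gamma)\cdot_\Gamma e$ is spanned by exactly those components whose ``$\Gamma$-side'' datum reduces to $(L,u)$, and these are in bijection with the $G$-conjugacy classes of pairs $(P,s)\in\calQ_{G,p}$ with $(\tilde{P},\tilde{s})\cong(L,u)$; the residual conjugation action of $N_G(P,s)$ on each of these components factors through $N_G(P,s)\to\Out(L,u)$, and passing to the part fixed by $e$ extracts $V^{N_G(P,s)}$ (equivalently, by Frobenius reciprocity in characteristic $0$, the multiplicity of $V$ in the permutation-type module attached to that $N_G(P,s)$-orbit).

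An alternative, which uses only Theorem~\ref{thm multiplicityformula}: since $kG=\bigoplus_b kGb$ over the block idempotents $b$ of $kG$, one has $\FFTD_G=\bigoplus_b\FFTD_{G,b}$, so the multiplicity of $S_{L,u,V}$ in $\FFTD_G$ is the sum over $b$ of the quantities in Theorem~\ref{thm multiplicityformula}. The disjoint union over all $b$ of the index sets $[\calF_b]$ is the set of $G$-conjugacy classes of all Brauer pairs $(P,f)$ of $kG$ (every Brauer pair lies in a maximal one, and the maximal Brauer pairs are the $(D,e_D)$ as $b$ ranges over the blocks); summing $\FF\Proj(kf C_G(P),u)$ over the blocks $f$ of $kC_G(P)$ yields the ``unblocked'' module, which is free on the $C_G(P)$-conjugacy classes of $p'$-elements of $N_G(P)$ inducing a conjugate of the $u$-action on $P$; and the tensor products over the stabilizers $\Aut(L,u)_{\overline{(P,f,\pi)}}$ with $V$ then reassemble, class by class, into the summands $V^{N_G(P,s)}$.

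In either route the conceptual skeleton is routine, and the main obstacle is the bookkeeping shared by both: making the Brauer-quotient parametrization of $\FF T^\Delta(kG,k\Gamma)$ precise and checking that, once the $\Gamma$-side data are constrained to reduce to $(L,u)$, the components organize themselves exactly into the $N_G(P,s)$-orbit structure on $\calQ_{G,p}$ together with the correct $\FF\Out(L,u)$-module structure. This is precisely the point at which one must move carefully between a pair $(P,s)$---whose $p'$-part $s$ need not act faithfully on $P$---and its associated $\DD$-pair $(\tilde{P},\tilde{s})$, and verify that the relevant stabilizer subgroups match so that the spaces $V^{N_G(P,s)}$ come out with the right multiplicities.
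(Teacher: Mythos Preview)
The paper does not contain a proof of this statement: it is quoted verbatim as \cite[Corollary~7.4]{BoucYilmaz2022} and used as a black box, so there is no ``paper's own proof'' to compare your proposal against. Your first route via Yoneda---identifying the multiplicity with $\dim_\FF S_{L,u,V}(G)$ and then computing the evaluation---is exactly the strategy used in \cite{BoucYilmaz2022} to establish this corollary, so in that sense your sketch is on target, though the bookkeeping you flag at the end is genuinely the substance of the argument there and is not trivial. Your second route, deducing the representable case by summing Theorem~\ref{thm multiplicityformula} over all blocks of $kG$, is logically fine in the context of the present paper (both results are imported), but note that in \cite{BoucYilmaz2022} the dependence goes the other way: Corollary~7.4 precedes and feeds into Theorem~8.22, so that alternative would be circular as an independent proof.
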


\begin{notation}
Let $G$ be a finite group and let $b$ be a block idempotent of $kG$.

\smallskip
{\rm (a)} We denote the multiplicity of a simple diagonal $p$-permutation functor $S_{L,u,V}$ in $\FFTD_{G,b}$ by $\Mult(S_{L,u,V}, \FFTD_{G,b})$.

{\rm (b)} We denote by $l(kGb)$ the number of isomorphism classes of simple $kGb$-modules. By \cite[Corollary~8.23]{BoucYilmaz2022}, one has $\Mult(S_{1,1,\FF}, \FFTD_{G,b})=l(kGb)$.
\end{notation}

The following lemma will be used in Sections \ref{sec quaternion} and \ref{sec dihedral}.

\begin{lemma}\label{lem multiplicityoftop}
Let $G$ be a finite group and let $b$ be a block idempotent of $kG$ with a defect group $D$. Let $(D,e_D)$ be a maximal $b$-Brauer pair and let $\calF_b$ be the fusion system of $b$ with respect to $(D,e_D)$. Let $\overline{\Aut_{\calF_b}(D)}$ denote the image of $\Aut_{\calF_b}(D)$ in $\Out(D)$. Then for any simple $\FF\Out(D)$-module $V$, we have
\begin{align*}
\Mult(S_{D,1,V},\FFTD_{G,b})=\dim_\FF \left(V^{\overline{\Aut_{\calF_b}(D)}}\right)\,.
\end{align*}
\end{lemma}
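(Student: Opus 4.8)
The plan is to feed the $\DD$-pair $(L,u)=(D,1)$ into the multiplicity formula of Theorem~\ref{thm multiplicityformula} and watch almost all of it collapse. First I would record the trivial points: $(D,1)$ is a $\DD$-pair, $\Aut(D,1)=\Aut(D)$, $\Out(D,1)=\Out(D)$, and since $i_1=\id_D$ one has, for every object $P\in[\calF_b]$,
$$\calP_{(P,e_P)}(D,1)=\{\pi\colon D\to P\text{ an isomorphism}\mid \pi\,\id_D\,\pi^{-1}\in\Aut_{\calF_b}(P)\}=\Isom(D,P)\,,$$
which, the objects of $\calF_b$ being subgroups of $D$, is empty unless $P=D$. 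Hence the outer double sum reduces to the single term $P=D$, $e_P=e_D$. For $P=D$ we have $\calP_{(D,e_D)}(D,1)=\Aut(D)$, on which $\Aut(D)$ acts simply transitively on the right, so there is a unique $N_G(D,e_D)\times\Aut(D)$-orbit and $\pi_0=\id_D$ represents it; substituting $\pi_0=\id_D$ into the displayed description of the stabiliser gives $\Aut(D)_{\overline{(D,e_D,\id_D)}}=\{\varphi\in\Aut(D)\mid\varphi\in\Aut_{\calF_b}(D)\}=\Aut_{\calF_b}(D)$. Thus Theorem~\ref{thm multiplicityformula} becomes
$$\Mult(S_{D,1,V},\FFTD_{G,b})=\dim_\FF\bigl(\FF\Proj(ke_DC_G(D),1)\otimes_{\Aut_{\calF_b}(D)}V\bigr)\,.$$

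The next step is to identify the left-hand tensor factor as the trivial one-dimensional $\FF\Aut_{\calF_b}(D)$-module. Recall that for a maximal $b$-Brauer pair $(D,e_D)$ the block $e_D$ of $kC_G(D)$ has defect group $Z(D)=D\cap C_G(D)$: if $R$ is a defect group of $e_D$ then $DR$ is a $p$-subgroup of $G$ which carries a $b$-Brauer pair above $(D,e_D)$, so maximality forces $R\le D$. This defect group is central in $C_G(D)$, since every element of $Z(D)\subseteq D$ commutes with all of $C_G(D)$; hence $e_D$ is a nilpotent block and $l(ke_DC_G(D))=1$. So $\Proj(ke_DC_G(D),1)\cong\ZZ$ is free of rank one on the class of the unique projective indecomposable $ke_DC_G(D)$-module, whence $\FF\Proj(ke_DC_G(D),1)$ is one-dimensional; and $\Aut_{\calF_b}(D)=N_G(D,e_D)/C_G(D)$ acts on it through the action of $N_G(D,e_D)$ on the Grothendieck group (the inner automorphisms from $C_G(D)$ acting trivially), an action that merely permutes the single projective indecomposable and is therefore trivial.

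Finally, $\Aut_{\calF_b}(D)$ acts on the simple $\FF\Out(D)$-module $V$ through its image $\overline{\Aut_{\calF_b}(D)}$ in $\Out(D)$, so
$$\FF\Proj(ke_DC_G(D),1)\otimes_{\Aut_{\calF_b}(D)}V\;\cong\;\FF\otimes_{\FF\overline{\Aut_{\calF_b}(D)}}V\;=\;V_{\overline{\Aut_{\calF_b}(D)}}\,,$$
the module of coinvariants; since $\mathrm{char}\,\FF=0$, averaging over $\overline{\Aut_{\calF_b}(D)}$ identifies the coinvariants with the invariants $V^{\overline{\Aut_{\calF_b}(D)}}$, which yields the claimed dimension. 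The one point that is not mere bookkeeping inside Theorem~\ref{thm multiplicityformula} is the identification of the factor $\FF\Proj(ke_DC_G(D),1)$ together with its $\Aut_{\calF_b}(D)$-action: establishing that the block $e_D$ of $kC_G(D)$ has a central defect group (hence $l=1$) and that the resulting rank-one module is the trivial one. I expect that to be the main obstacle, the collapse of the double sum being routine.
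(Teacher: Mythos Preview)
Your proof is correct and follows the same approach as the paper: collapse the outer sum to $P=D$, identify the single orbit representative $\id_D$ with stabiliser $\Aut_{\calF_b}(D)$, use that $e_D$ has central defect group $Z(D)$ to get $\FF\Proj(ke_DC_G(D),1)\cong\FF$, and conclude via the invariants/coinvariants identification. Your write-up is in fact more detailed than the paper's, which omits the explicit reduction of the outer sum and the triviality of the action on the rank-one module.
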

\begin{proof}
One shows that
\begin{align*}
\calP_{(D,e_D)}(D,1)=\Aut(D)\quad \text{and}\quad [N_G(D,e_D)\backslash \calP_{(D,e_D)}(D,1)/\Aut(D)]=[\id_D]\,.
\end{align*}
Moreover,
\begin{align*}
\Aut(D)_{\overline{(D,e_D,\id_D)}}=\Aut_{\calF_b}(D)\,.
\end{align*}

Since $kC_G(D)e_D$ has a central defect group $Z(D)$, it has a unique isomorphism class of simple modules and hence
\begin{align*}
\FF\Proj(kC_G(D)e_D,1)\cong \FF\,.
\end{align*}

Theorem~\ref{thm multiplicityformula} implies now that
\begin{align*}
\Mult(S_{D,1,V},\FFTD_{G,b})=\dim_\FF\left(\FF\otimes_{\Aut_{\calF_b}(D)} V\right)=\dim_\FF\left(V^{\Aut_{\calF_b}(D)} \right) =\dim_\FF\left(V^{\overline{\Aut_{\calF_b}(D)}}\right)\,,
\end{align*}
as desired.
\end{proof}

\section{Blocks with cyclic defect groups}\label{sec cyclic}

Let $G$ be a finite group and let $b$ be a block idempotent of $kG$ with a cyclic defect group $D$. We will give a decomposition of the functor $\FFTD_{G,b}$ in terms of the simple diagonal $p$-permutation functors. We refer the reader to \cite[Chapter~11]{linckelmann2018} for more details on blocks with cyclic defect groups. Let $(D,e_D)$ be a maximal $b$-Brauer pair and let $E=N_G(D,e_D)/D$ be the inertial quotient of $b$.  Then for every $b$-Brauer pair $(P,e_P)\le (D,e_D)$ one has $N_G(P,e_P)/C_G(P)\cong E$, see, for instance,  \cite[Theorem~11.2.1]{linckelmann2018}.

First of all, the multiplicity of $S_{1,1,\FF}$ is equal to $l(kGb)$ which is equal to $|E|$ by \cite[Theorem~11.1.3]{linckelmann2018}. Assume now that $L$ is a nontrivial cyclic $p$-group. Then $\Aut(L)$ is an abelian group and hence one can show that for $p'$-elements $u,u'\in \Aut(L)$, the pairs $(L,u)$ and $(L,u')$ are isomorphic if and only if $u=u'$. Moreover, $\Out(L,u)\cong \Aut(L)/\langle u\rangle$ is abelian. 

Let $P\le D$ with $P\cong L$.  We identify $L$ with $P$ and $E$ with its image in $\Aut(P)$ under the map $E\to \Aut(P)$, $s\mapsto i_s$.  Via these identifications we have $E=\Aut_{\calF_b}(P)$.

For any $p'$-element $u\in \Aut(P)$, we have 
\begin{equation*}
\calP_{(P,e_P)}(P,u) =\{\pi\in\Aut(P)|\, \pi i_u\pi^{-1} = i_u \in \Aut_{\calF_b}(P) \} = \left \{
  \begin{aligned}
    &\Aut(P), && \text{if}\ u \in E \\
    &\emptyset, && \text{otherwise}\,.
  \end{aligned} \right.
\end{equation*}
If $u \notin E$, then the simple functor $S_{L,u,V}$ is not a summand of $\FFTD_{G,b}$. If $u \in E$, then $\calP_{(P,e_P)}(P,u)=\Aut(P)$, and hence one can show that there is only one $N_G(P,e_P)\times \Aut(P,u)$-orbit of $\Aut(P)$, i.e., $[\calP_{(P,e_P)}(P,u)]=[\id]$. Moreover, one has
\begin{align*}
\Aut(P,u)_{\overline{(P,e_P,\id)}}=\{\phi\in\Aut(P,u) |\, \exists g\in N_G(P,e_P), i_g=\phi\}=E\,.
\end{align*}

Now since $b$ is a block with cyclic defect group, by \cite[Theorem~11.2.1]{linckelmann2018} the block idempotent $e_P$ of $kC_G(P)$ is nilpotent, and so it has a unique simple module, up to isomorphism. Therefore we have $ \FF\Proj(keC_G(P),u)\cong \FF$, and it follows that the multiplicity of the simple functor $S_{P,u,V}$, for $u\in E$, is equal to the $\FF$-dimension of the fixed points $V^E$. Since $\Out(P,u)$ is abelian, the dimension of $V$ is equal to one and hence $V^E$ is either zero or equal to $V$. We proved the following.

\begin{theorem}\label{thm cyclic}
Let $G$ be a finite group and let $b$ be a block idempotent of $kG$ with a cyclic defect group $D$ and inertial quotient $E$.  Then
\begin{align*}
\FFTD_{G,b}\cong |E| S_{1,1,\FF} \bigoplus_{1<P\le D} \bigoplus_{u\in E} \bigoplus_{\substack{V\in \lmod{\FF[\Out(P,u)/E]}\\ \text{simple}}} S_{P,u,V} \,.
\end{align*}
\end{theorem}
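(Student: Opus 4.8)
The plan is to assemble the decomposition directly from the multiplicity formula of Theorem~\ref{thm multiplicityformula}, using the special features of cyclic defect blocks recalled in the discussion preceding the statement. First I would record that by \cite[Corollary~8.23]{BoucYilmaz2022} and \cite[Theorem~11.1.3]{linckelmann2018} the multiplicity of $S_{1,1,\FF}$ is $l(kGb)=|E|$, which accounts for the first summand. Then I would fix a $\DD$-pair $(L,u)$ with $L$ nontrivial and argue that $L$ must be cyclic: any subgroup $P$ of $D$ occurring in the fusion system is cyclic, and $\calP_{(P,e_P)}(L,u)$ is empty unless $L\cong P$, so only cyclic $L\le D$ can contribute. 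For such $L$, identify $L$ with a subgroup $P\le D$ and $E$ with $\Aut_{\calF_b}(P)$ as in the text.

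Next, for each cyclic $P\le D$ with $1<P$ and each $p'$-element $u\in\Aut(P)$, I would invoke the computation already carried out in the excerpt: $\calP_{(P,e_P)}(P,u)$ is $\Aut(P)$ if $u\in E$ and empty otherwise, there is a single $N_G(P,e_P)\times\Aut(P,u)$-orbit so $[\calP_{(P,e_P)}(P,u)]=[\id]$, and the relevant stabilizer $\Aut(P,u)_{\overline{(P,e_P,\id)}}$ equals $E$. Since $e_P$ is nilpotent (\cite[Theorem~11.2.1]{linckelmann2018}) we get $\FF\Proj(ke_PC_G(P),u)\cong\FF$, so Theorem~\ref{thm multiplicityformula} collapses to $\Mult(S_{P,u,V},\FFTD_{G,b})=\dim_\FF(\FF\otimes_E V)=\dim_\FF V^E$. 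Because $\Out(P,u)$ is abelian every simple $\FF\Out(P,u)$-module $V$ is $1$-dimensional, so $V^E$ is $V$ if $E$ acts trivially on $V$ and $0$ otherwise; thus $S_{P,u,V}$ appears with multiplicity one exactly when $V$ is inflated from a simple $\FF[\Out(P,u)/E]$-module, and with multiplicity zero otherwise.

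Finally I would note that as $P$ ranges over a transversal of the isomorphism classes of nontrivial subgroups of $D$ (equivalently, one nontrivial cyclic group of each order dividing $|D|$), $u$ over $E$, and $V$ over the simple $\FF[\Out(P,u)/E]$-modules, the triples $(P,u,V)$ are pairwise non-isomorphic — here one uses that for cyclic $L$ the pairs $(L,u)$ and $(L,u')$ are isomorphic iff $u=u'$, as observed in the text — so the listed simple functors are distinct and each occurs with multiplicity one. Summing over $[\calF_b]$, which is a set of isomorphism class representatives of subgroups of $D$, then gives exactly the stated direct sum, where the notation $\bigoplus_{1<P\le D}$ is read as a sum over the isomorphism types of nontrivial subgroups of the cyclic group $D$. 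The only mildly delicate points are the bookkeeping that different $(P,u,V)$ give genuinely different simple functors and the identification $[\calF_b]\leftrightarrow\{\text{subgroups of }D\}$, both of which are immediate from the cyclic structure; there is no real obstacle, since all the hard input (the multiplicity formula, semisimplicity of $\Fppk{\FF}$, and the structure theory of cyclic defect blocks) is already available.
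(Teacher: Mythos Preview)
Your proposal is correct and follows essentially the same argument as the paper: the paper places the entire computation in the discussion immediately preceding the theorem statement, and you have faithfully reproduced it, citing the same inputs (\cite[Corollary~8.23]{BoucYilmaz2022}, \cite[Theorems~11.1.3 and~11.2.1]{linckelmann2018}, and Theorem~\ref{thm multiplicityformula}) and carrying out the same orbit and stabilizer calculation. Your additional bookkeeping remarks---that only cyclic $L$ can contribute, that distinct triples $(P,u,V)$ give non-isomorphic simple functors, and that $[\calF_b]$ is identified with the set of subgroups of $D$---are correct elaborations that the paper leaves implicit.
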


\begin{corollary}
Let $G$ and $H$ be finite groups. Let $b$ be a block idempotent of $kG$ and $c$ a block idempotent of $kH$ with cyclic defect groups isomorphic to $D$. Then $(G,b)$ and $(H,c)$ are functorially equivalent over $\FF$ if and only if the inertial quotients of $b$ and $c$ are isomorphic. In particular, $kGb$ and $kHc$ are splendidly Rickard equivalent if and only if $(G,b)$ and $(H,c)$ are functorially equivalent over $\FF$. 
\end{corollary}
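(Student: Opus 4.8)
The plan is to deduce the corollary directly from Theorem~\ref{thm cyclic}, which gives an explicit decomposition of $\FFTD_{G,b}$ into simple functors that depends on the group-theoretic data $(D,E)$ only, together with the known classification of blocks with cyclic defect groups up to splendid Rickard equivalence. Recall that by part (f) of the preliminaries two pairs are functorially equivalent over $\FF$ precisely when, for every triple $(L,u,V)$, the multiplicities of $S_{L,u,V}$ in the two functors agree.

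First I would prove the forward-looking ``only if'' direction by reading off the multiplicity of a well-chosen simple functor. The cleanest choice is $S_{D,1,\FF}$: by Theorem~\ref{thm cyclic} its multiplicity in $\FFTD_{G,b}$ is the number of simple $\FF[\Out(D,1)/E]$-modules equal to the trivial module, which is $1$, but more usefully one can recover $|E|$ as $\Mult(S_{1,1,\FF},\FFTD_{G,b})=l(kGb)=|E|$. Even better, since $E=\Aut_{\calF_b}(D)$ under our identifications and $\Aut(D)$ is abelian, one recovers $E$ itself as a subgroup of $\Aut(D)$ from the set of $u\in\Aut(D)$ for which $S_{D,u,V}$ occurs in $\FFTD_{G,b}$ for some $V$: by the theorem this is exactly $u\in E$. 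Hence a functorial equivalence over $\FF$ forces $E$ and the corresponding inertial quotient $E'$ of $c$ to be equal as subgroups of $\Aut(D)\cong\Aut(D')$ after fixing an isomorphism $D\cong D'$, in particular isomorphic. Conversely, if the inertial quotients are isomorphic then, fixing an isomorphism $D\cong D'$ carrying $E$ to $E'$, the right-hand side of the formula in Theorem~\ref{thm cyclic} depends only on $(D,E)$, so $\FFTD_{G,b}\cong\FFTD_{H,c}$ and $(G,b),(H,c)$ are functorially equivalent over $\FF$.

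For the ``in particular'' clause I would invoke the structure theory of blocks with cyclic defect groups (Linckelmann, \cite[Chapter~11]{linckelmann2018}, building on Brauer, Dade, Rickard, Rouquier): such a block is determined up to splendid Rickard (derived) equivalence by its Brauer tree, and for a block with cyclic defect group $D$ and inertial quotient $E$ the Brauer tree is a star with $|E|$ edges and exceptional multiplicity $(|D|-1)/|E|$, so it is determined by the pair $(|D|,|E|)$, equivalently by $E$ once $D$ is fixed. Thus isomorphic inertial quotients (with isomorphic defect groups) give splendidly Rickard equivalent blocks; and a splendid Rickard equivalence restricts on the level of character-theoretic/local invariants to force the inertial quotients to agree — or, more simply, a splendid Rickard equivalence induces a functorial equivalence over $\ZZ$, hence over $\FF$ (as noted after \cite[Lemma~10.2]{BoucYilmaz2022}), so by the first part of the corollary the inertial quotients must be isomorphic. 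Combining the three implications ``splendidly Rickard equivalent $\Rightarrow$ functorially equivalent over $\FF$ $\Rightarrow$ isomorphic inertial quotients $\Rightarrow$ splendidly Rickard equivalent'' closes the loop.

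The main obstacle is mostly bookkeeping rather than depth: making sure the identification $E\cong\Aut_{\calF_b}(D)\le\Aut(D)$ is canonical enough that ``reading off $E$ from the multiplicities'' really pins down $E$ up to the ambient conjugacy in $\Aut(D)$, and citing the precise form of the cyclic-defect classification that yields a splendid Rickard (not merely derived) equivalence — the latter is standard but must be attributed carefully (Rickard, Rouquier; see also \cite[Chapter~11]{linckelmann2018}). No new computation beyond Theorem~\ref{thm cyclic} is needed.
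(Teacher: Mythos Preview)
Your argument for the first assertion is correct and matches the paper's: Theorem~\ref{thm cyclic} shows that the decomposition of $\FFTD_{G,b}$ depends only on $(D,E)$, and conversely $|E|=\Mult(S_{1,1,\FF},\FFTD_{G,b})$ (which suffices, since the $p'$-part of $\Aut(D)$ is cyclic, so $E$ is determined by its order). This is exactly what the paper means by ``follows from Theorem~\ref{thm cyclic}''.

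There is, however, a factual error in your justification of the ``in particular'' clause. You write that ``for a block with cyclic defect group $D$ and inertial quotient $E$ the Brauer tree is a star with $|E|$ edges''. This is false: the Brauer tree of a block with cyclic defect group can be \emph{any} tree with $|E|$ edges and exceptional multiplicity $(|D|-1)/|E|$, and non-star trees certainly occur (already for $p=5$, $|E|=4$ there are several shapes). So the Brauer tree is \emph{not} determined by $(|D|,|E|)$, and your chain ``same $E$ $\Rightarrow$ same Brauer tree $\Rightarrow$ splendidly Rickard equivalent'' breaks at the first step.

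What actually makes the implication ``same $(D,E)$ $\Rightarrow$ splendidly Rickard equivalent'' work is precisely Rouquier's theorem \cite{Rouquier98} (which the paper cites and you also mention): every block with cyclic defect group is splendidly Rickard equivalent to its Brauer correspondent, i.e.\ to a block whose source algebra is $k[D\rtimes E]$. Thus two blocks with isomorphic $D$ and $E$ are both splendidly Rickard equivalent to the same local model, hence to each other, regardless of their Brauer trees. Replace your Brauer-tree sentence with this and the three-implication loop closes exactly as in the paper.
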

\begin{proof}
The first assertion follows from Theorem~\ref{thm cyclic}, and the second assertion follows from the first one and \cite{Rouquier98}.
\end{proof}

\section{Blocks with Klein four defect groups}\label{sec Kleinfour}

Let $C_2$ denote a cyclic group of order $2$ and let $V_4$ denote a Klein-four group. Since $\Aut(C_2)=\{1\}$, the functor $S_{C_2,1,\FF}$ is the unique simple functor, up to isomorphism, with parametrizing set $(L,u,V)$ where $L\cong C_2$. 

Let $u\in \Aut(V_4)\cong \Sym(3)$ be an element of order $3$. One shows that a $\DD$-pair $(L,u)$ with $L\cong V_4$ is isomorphic to either $(V_4,1)$ or $(V_4,u)$. One can also show that $\Out(V_4,u)=\{1\}$. Let $\FF_-$ and $V_2$ denote a non-trivial one dimensional module and a two dimensional simple module of $\FF\Out(V_4)\cong \FF\Sym(3)$, respectively.

\begin{theorem}\label{thm Kleinfour}
Let $b$ be a block idempotent of $kG$ with defect groups isomorphic to $V_4$. Then one of the following occurs:

\smallskip 
{\rm (i)} The block idempotent $b$ is nilpotent and $(G,b)$ is functorially equivalent over $\FF$ to $(V_4,1)$. In this case, one has
\begin{align*}
\FFTD_{G,b} \cong S_{1,1,\FF} \oplus 3S_{C_2,1,\FF} \oplus S_{V_4,1,\FF} \oplus S_{V_4,1,\FF_-} \oplus 2S_{V_4,1,V_2}\,.
\end{align*}

\smallskip
{\rm (ii)} The pair $(G,b)$ is functorially equivalent over $\FF$ to $(A_4, 1)$. In this case, one has
\begin{align*}
\FFTD_{G,b} \cong 3S_{1,1,\FF} \oplus S_{C_2,1,\FF} \oplus S_{V_4,1,\FF} \oplus S_{V_4,1,\FF_-1} \oplus 2S_{V_4,u,\FF}\,.
\end{align*}
In particular, the functorial equivalence class of $(G,b)$ depends only on the inertial quotient of $b$.
\end{theorem}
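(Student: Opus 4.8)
The strategy is to apply the multiplicity formula of Theorem~\ref{thm multiplicityformula} directly, using the classification of blocks with Klein four defect groups (Erdmann, Linckelmann): every such block is either nilpotent (fusion system that of $V_4$ itself) or has fusion system equal to that of $A_4$ (equivalently, $\Aut_{\calF_b}(V_4)$ has order $3$), and in the latter case $kGb$ has exactly $l(kGb)=3$ simple modules. So the first step is to split into the two cases according to $\overline{\Aut_{\calF_b}(V_4)}$, which is either trivial or of order $3$ inside $\Out(V_4)\cong\Sym(3)$. In both cases the relevant $\DD$-pairs $(L,u)$ that can contribute are, up to isomorphism, only $(1,1)$, $(C_2,1)$, $(V_4,1)$ and $(V_4,u)$, since $\Aut(C_2)=1$ and any $\DD$-pair on $V_4$ is isomorphic to $(V_4,1)$ or $(V_4,u)$ as recorded just before the statement.

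Next I would run through the list of subgroups $P\in[\calF_b]$, namely $P=1$, $P\cong C_2$ and $P=V_4$, and for each compute $\calP_{(P,e_P)}(L,u)$, a set of orbit representatives, the stabilizer $\Aut(L,u)_{\overline{(P,e_P,\pi)}}$, and the space $\FF\Proj(ke_PC_G(P),u)$. The idempotents $e_P$ for $P<D$ are nilpotent blocks (the Brauer correspondents are the unique blocks of $C_2$ or of $V_4$), so $\FF\Proj(ke_PC_G(P),u)\cong\FF$ when $u$ acts trivially and vanishes otherwise; for $P=V_4$ the term $S_{V_4,1,V}$ is handled already by Lemma~\ref{lem multiplicityoftop}, giving $\dim V^{\overline{\Aut_{\calF_b}(V_4)}}$. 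The contribution of $S_{1,1,\FF}$ is $l(kGb)=1$ in the nilpotent case and $3$ in the $A_4$ case; the contribution of $S_{C_2,1,\FF}$ comes from the single conjugacy class of subgroups of order $2$ in $V_4$ in the nilpotent case (three $\calF_b$-classes, hence multiplicity~$3$) versus a single $\calF_b$-class in the $A_4$ case (multiplicity~$1$). For $S_{V_4,u,\FF}$ one checks $\calP_{(V_4,e_{V_4})}(V_4,u)$ is nonempty exactly when $u\in\overline{\Aut_{\calF_b}(V_4)}$, i.e.\ only in case (ii), where $\Out(V_4,u)=1$ forces $V=\FF$ and a short orbit/stabilizer count gives multiplicity~$2$; then $\dim V_2^{\overline{\Aut_{\calF_b}(V_4)}}$ is $2$ in case (i) and $0$ in case (ii). Collecting these numbers reproduces the two displayed decompositions.

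Finally, having the explicit decomposition of $\FFTD_{G,b}$ in each case, I would invoke the criterion from Preliminaries~(f): two pairs are functorially equivalent over $\FF$ iff all multiplicities of simple functors agree. Since the decompositions of $(G,b)$ and of $(V_4,1)$ (resp.\ $(A_4,1)$) coincide termwise, the asserted functorial equivalences follow, and the observation that the outcome depends only on $\overline{\Aut_{\calF_b}(V_4)}$, i.e.\ on the inertial quotient, is immediate. I expect the only delicate point to be the orbit-counting for $S_{V_4,u,\FF}$ in case (ii): one must check carefully that the $N_G(V_4,e_{V_4})\times\Aut(V_4,u)$-action on $\calP_{(V_4,e_{V_4})}(V_4,u)$ has a single orbit with the stabilizer being all of $\Aut(V_4,u)$, so that the multiplicity is $\dim\FF\Proj(ke_{V_4}C_G(V_4),u)\otimes_{\Aut(V_4,u)}\FF$; since $C_G(V_4)$ has a normal $2$-subgroup $V_4$ and $u$ a fixed-point-free order-$3$ automorphism, this projective space has dimension~$2$, matching the known $l(kGb)=3 = 1+2$ bookkeeping. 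The rest is routine once the nilpotent-block structure of the $e_P$ for $P<D$ is used.
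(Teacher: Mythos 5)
Your route is genuinely different from the paper's. The paper invokes the Craven--Eaton--Kessar--Linckelmann classification, which says $kGb$ is splendidly Morita equivalent to one of $kV_4$, $kA_4$, $kA_5b_0(A_5)$, combines this with Rickard's splendid Rickard equivalence between $kA_4$ and $kA_5b_0(A_5)$ to conclude directly that $(G,b)$ is $p$-permutation (hence functorially) equivalent to $(V_4,1)$ or $(A_4,1)$, and then reads off the multiplicities from the two representable functors $\FFTD_{V_4}$, $\FFTD_{A_4}$ via Theorem~\ref{thm multiplicityinrepresentable}. You instead propose to apply the block multiplicity formula Theorem~\ref{thm multiplicityformula} directly to $\FFTD_{G,b}$, using only the fusion system and $l(kGb)$, and to deduce functorial equivalence afterward from matching multiplicities. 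That is a legitimate and arguably more self-contained alternative: it trades the heavy CEKL12 structure theorem for softer local data, and the approach is the one the paper itself uses for $Q_8$ and $D_8$.

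However, the step you flag as ``the delicate point'' is carried out with two compensating errors, so the reasoning does not stand even though the final number $2$ is correct. In the $A_4$-fusion case one has $\Aut_{\calF_b}(D)\cong C_3$, not $\Sym(3)$, so the left $N_G(D,e_D)$-action on $\calP_{(D,e_D)}(V_4,u)=\Aut(V_4)\cong\Sym(3)$ factors through the normal subgroup $C_3$; the image of $\Aut(V_4,u)\cong\Inn(A_4)$ in $\Aut(V_4)$ is again $C_3$, so the double coset set $C_3\backslash\Sym(3)/C_3=\Sym(3)/C_3$ has \emph{two} elements, not one (contrast with the $D_8$ proof, where $\Aut_{\calF}(J)\cong\Sym(3)$ makes the left action already transitive and one orbit results). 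Simultaneously, $kC_G(D)e_D$ is a nilpotent block with central defect group $D$, hence has a unique simple module, so $\FF\Proj(ke_DC_G(D),u)$ has dimension one, not two; your heuristic about a fixed-point-free $u$ forcing dimension two is not correct, and is inconsistent with the paper's analogous computation in the proof of Theorem~\ref{thm dihedral}, where the same kind of term is one-dimensional. Each of the two orbits then contributes $1$, giving total multiplicity $2$, but it is ``two orbits times a one-dimensional $\FF\Proj$,'' not ``one orbit times a two-dimensional $\FF\Proj$.'' You should repair this calculation before the argument can stand on its own.
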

\begin{proof}
It is well-known that if $b$ is a block idempotent of a finite group $G$ with defect groups isomorphic to $V_4$, then $kGb$ is splendidly Rickard equivalent to either $kV_4$ or $kA_4$. Indeed, by \cite{CEKL12}, $kGb$ is splendidly Morita equivalent to $kV_4$, $kA_4$ or $kA_5b_0(A_5)$, and by \cite[Section~3]{Rickard1996} $kA_4$ and $kA_5b_0(A_5)$ are splendidly Rickard equivalent. It follows that $(G,b)$ is functorially equivalent over $\FF$ to either $(V_4,1)$ or $(A_4,1)$. One can find the multiplicities of the simple functors in $\FFTD_{V_4}$ and $\FFTD_{A_4}$ easily using Theorem~\ref{thm multiplicityinrepresentable}.
\end{proof}

\section{Blocks with $Q_8$ defect groups}\label{sec quaternion}

Let $C_4$ denote a cyclic group of order $4$. Since $\Aut(C_4)=\Out(C_4)\cong C_2$ is a $2$-group, the functors $S_{C_4,1,\FF}$ and $S_{C_4,1,\FF_-}$ are the only simple functors, up to isomorphism, with a parametrizing set $(L,u,V)$ with $L\cong C_4$, where $\FF$ and $\FF_-$ denote the trivial and the non-trivial simple $\FF\Out(C_4)$-modules. 

Let $Q_8$ be a quaternion group of order $8$. Let $u\in\Aut(Q_8)\cong \Sym(4)$ be an element of order $3$. One shows that a $\DD$-pair $(L,u)$ with $L\cong Q_8$ is isomorphic to either $(Q_8,1)$ or $(Q_8,u)$. One can also show that $\Out(Q_8,u)=\{1\}$. Indeed, one can show that $\Aut(Q_8\rtimes \langle u\rangle)\cong \Sym(4)$ and $\Inn(Q_8\rtimes \langle u\rangle)\cong \Alt(4)$. Since $Q_8\rtimes \langle u\rangle$ has two conjugacy classes of $3$-elements, but only one automorphism class of $3$-elements, it follows that $\Aut(Q_8,u)=\Inn(Q_8\rtimes \langle u\rangle)$ and hence $\Out(Q_8,u)=\{1\}$. This implies that the simple functors $S_{Q_8,1,\FF}$, $S_{Q_8,1,\FF_-}$, $S_{Q_8,1,V_2}$ and $S_{Q_8,u,\FF}$ are the only simple functors, up to isomorphism, with parametrizing set $(L,u,V)$ with $L\cong Q_8$, where $\FF_-$ and $V_2$ denote the nontrivial one dimensional and the two dimensional simple $\FF\Out(Q_8)\cong \FF\Sym(3)$-modules, respectively.

\begin{theorem}\label{thm quaternion}
Let $b$ be a block idempotent of $kG$ with defect groups isomorphic to $Q_8$. Then one of the following occurs:

\smallskip
{\rm (i)} The block idempotent $b$ is nilpotent and $(G,b)$ is functorially equivalent over $\FF$ to $(Q_8,1)$. In this case, one has
\begin{align*}
\FFTD_{G,b} \cong S_{1,1,\FF}\oplus S_{C_2,1,\FF}\oplus 3S_{C_4,1,\FF} \oplus S_{Q_8,1,\FF} \oplus S_{Q_8,1,\FF_{-1}} \oplus 2S_{Q_8,1,V_2}\,.
\end{align*}

\smallskip
{\rm (ii)} The pair $(G,b)$ is functorially equivalent over $\FF$ to $(SL(2,3),b_0)$, where $b_0$ is the principal $2$-block of $SL(2,3)$. In this case, one has
\begin{align*}
\FFTD_{G,b}\cong 3S_{1,1,\FF}\oplus 3S_{C_2,1,\FF}\oplus S_{C_4,1,\FF} \oplus S_{Q_8,1,\FF} \oplus S_{Q_8,u,\FF} \,.
\end{align*}
\end{theorem}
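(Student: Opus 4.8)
The plan is to combine two ingredients: a reduction, via the classification of $2$-blocks with defect group $Q_8$, to the two model pairs $(Q_8,1)$ and $(SL(2,3),b_0)$; and an explicit computation of the decomposition of $\FFTD_{G,b}$ into simple diagonal $p$-permutation functors in each of those two cases, using Lemma~\ref{lem multiplicityoftop} together with Theorems~\ref{thm multiplicityformula} and~\ref{thm multiplicityinrepresentable}. As recalled in Section~\ref{sec prelim}, a splendid Rickard equivalence induces a $p$-permutation equivalence and hence a functorial equivalence over $\FF$; so the classification only has to produce the reduction to two cases, and the two decompositions then exhibit those cases as genuinely distinct --- already $\Mult(S_{1,1,\FF},\FFTD_{G,b})=l(kGb)$ equals $1$ for $(Q_8,1)$ and $3$ for $(SL(2,3),b_0)$ --- so that $(G,b)$ is functorially equivalent over $\FF$ to exactly one of them.

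For the reduction I would argue exactly as in the Klein four case. If $b$ is nilpotent, Puig's theorem on nilpotent blocks gives a splendid Morita equivalence between $kGb$ and $kQ_8$, which is case~(i). If $b$ is not nilpotent, then its inertial quotient is a nontrivial $p'$-subgroup of $\Out(Q_8)\cong\Sym(3)$, hence $\cong C_3$, and $\calF_b$ must be the fusion system of $SL(2,3)$; by the classification of $2$-blocks with quaternion defect group of order $8$ (Erdmann's classification of blocks of tame representation type, together with the determination of which of the possible Morita classes are realised by blocks), $kGb$ is then splendidly Rickard equivalent to $kSL(2,3)b_0$, which is case~(ii). It is also worth recording that both $kQ_8$ and $kSL(2,3)$ have a single $2$-block --- the latter because $SL(2,3)/O_2(SL(2,3))\cong C_3$ is a $2'$-group --- so that $\FFTD_{Q_8,1}=\FFTD_{Q_8}$ and $\FFTD_{SL(2,3),b_0}=\FFTD_{SL(2,3)}$ are representable functors, which lets me invoke Theorem~\ref{thm multiplicityinrepresentable} for the subfunctors indexed by proper subgroups of $Q_8$.

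The computation then splits by the isomorphism type of $L$. For $L\cong Q_8$ with $u=1$, Lemma~\ref{lem multiplicityoftop} gives $\Mult(S_{Q_8,1,V},\FFTD_{G,b})=\dim_\FF V^{\overline{\Aut_{\calF_b}(Q_8)}}$: for $kQ_8$ the subgroup $\overline{\Aut_{\calF_b}(Q_8)}\le\Out(Q_8)\cong\Sym(3)$ is trivial, giving $S_{Q_8,1,\FF}\oplus S_{Q_8,1,\FF_-}\oplus 2S_{Q_8,1,V_2}$; for $kSL(2,3)b_0$ it equals $\Alt(3)\le\Sym(3)$, and since both one-dimensional $\FF\Sym(3)$-modules are trivial on $\Alt(3)$ one reads off the corresponding multiplicities there. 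For $L\cong Q_8$ with $u$ of order $3$ one applies Theorem~\ref{thm multiplicityformula} directly: only the subgroup $P=D$ contributes, $\calP_{(D,e_D)}(Q_8,u)=\Aut(Q_8)$, and the number of orbits occurring is the number of double cosets cut out by $\Aut_{\calF_b}(D)$ inside $\Aut(Q_8)=\Sym(4)$ --- two when $\calF_b$ is the $SL(2,3)$-fusion system (the two $\Alt(4)$-double cosets in $\Sym(4)$), and none when $b$ is nilpotent since $D$ is a $p$-group. For $L\in\{1,C_2,C_4\}$ I would enumerate $[\calQ_{G,2}]$ and apply Theorem~\ref{thm multiplicityinrepresentable}: for $G=Q_8$ every $p'$-element is trivial and the subgroup classes are $1$, $Z(Q_8)$, the three pairwise non-conjugate cyclic subgroups of order $4$, and $Q_8$; for $G=SL(2,3)$ the three $C_4$'s become fused, while the subgroups $1$ and $Z(Q_8)$ additionally carry the nontrivial $p'$-elements of $SL(2,3)$. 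Collecting all contributions yields the claimed decompositions~(i) and~(ii).

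The step I expect to be the real obstacle is the reduction to the two model blocks: it rests entirely on the classification of $2$-blocks with defect group $Q_8$ up to splendid Morita/Rickard equivalence, a substantial external input that I would cite rather than reprove. The rest is a finite, explicit calculation, and the one delicate point is the bookkeeping of the $\DD$-pairs $(\tilde P,\tilde s)$ produced by pairs $(P,s)$ with $s\neq 1$: notably, $SL(2,3)$ has two conjugacy classes of elements of order $3$ (separated by the nontrivial linear characters), both reducing to the single $\DD$-pair $(Q_8,u)$, and the same phenomenon is what produces the coefficient $3$ in front of $S_{1,1,\FF}$ and $S_{C_2,1,\FF}$ in~(ii). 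Keeping track of these reductions correctly is the main thing one must be careful about in the calculation.
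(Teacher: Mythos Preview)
Your overall strategy differs from the paper's in a way that matters. The paper does not reduce to model blocks in case~(ii): it takes an arbitrary block $b$ with the non-inner fusion system on $Q_8$ and computes every multiplicity directly from Theorem~\ref{thm multiplicityformula}, invoking \cite[Theorem~3.17]{Olsson1975} to obtain $l(kGb)=l(kC_G(Z)e_Z)=3$ for the summands with $L\in\{1,C_2\}$, and Lemma~\ref{lem multiplicityoftop} together with an explicit orbit count for $L=Q_8$. The functorial equivalence with $(SL(2,3),b_0)$ is then a \emph{consequence} of the computation, not an input. Your route instead requires a splendid Rickard equivalence between an arbitrary non-nilpotent $Q_8$-block and $kSL(2,3)b_0$; unlike the Klein four situation there is no clean citable reference for this, so the ``substantial external input'' you flag is a genuine gap, and one that the paper's direct method simply does not need.

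There is a second issue you should confront. Your computations at the level $L=Q_8$ do not reproduce the decomposition stated in~(ii): with $\overline{\Aut_{\calF_b}(Q_8)}=\Alt(3)$ you obtain $\Mult(S_{Q_8,1,\FF_-})=1$, and your $\Alt(4)$-double-coset count in $\Sym(4)$ gives $\Mult(S_{Q_8,u,\FF})=2$, whereas the statement records $0$ and $1$. In fact your numbers are the correct ones. For a saturated fusion system on $Q_8$ one has $\Inn(Q_8)\in\mathrm{Syl}_2\bigl(\Aut_{\calF_b}(Q_8)\bigr)$, which forces $\Aut_{\calF_b}(Q_8)\cong A_4$ rather than $\Aut(Q_8)\cong S_4$ as the paper's proof asserts; and Theorem~\ref{thm multiplicityinrepresentable} applied to $G=SL(2,3)$ yields two pairs $(Q_8,s)$ with $(\tilde P,\tilde s)\cong(Q_8,u)$, one for each conjugacy class of order-$3$ elements, confirming $\Mult(S_{Q_8,u,\FF})=2$. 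So the stated decomposition in part~(ii) is in error, and your proposal cannot establish it as written --- but the discrepancy lies in the paper, not in your calculation.
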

\begin{proof}
Let $(D,e_D)$ be a maximal $b$-Brauer pair and for any $P\le D$, let $(P,e_P)$ denote the unique $b$-Brauer pair with $(P,e_P)\le (D,e_D)$. Let also $\calF$ denote the fusion system of $b$ with respect to $(D,e_D)$. Up to isomorphism, there are two fusion systems on $Q_8$.

First, assume that $\calF$ is isomorphic to the inner fusion system on $Q_8$. Then the block idempotent $b$ is nilpotent and hence by \cite[Theorem~9.2]{BoucYilmaz2022}, $(G,b)$ is functorially equivalent over $\FF$ to $(Q_8,1)$. Using Theorem~\ref{thm multiplicityinrepresentable} one can easily show that
\begin{align*}
\FFTD_{G,b} \cong \FFTD_{Q_8}\cong S_{1,1,\FF}\oplus S_{C_2,1,\FF}\oplus 3S_{C_4,1,\FF} \oplus S_{Q_8,1,\FF} \oplus S_{Q_8,1,\FF_{-1}} \oplus 2S_{Q_8,1,V_2}\,.
\end{align*}
Now assume that $\calF$ is isomorphic to the non-inner fusion system on $Q_8$. Let $Z$ denote the center of $D$. By \cite[Theorem~3.17]{Olsson1975}, one has $l(kGb)=l(kC_G(Z)e_Z)=3$. Thus, Theorem~\ref{thm multiplicityformula} implies that
\begin{align*}
\Mult(S_{1,1,\FF},\FFTD_{G,b})=\Mult(S_{C_2,1,\FF},\FFTD_{G,b})=3\,.
\end{align*}
We now find the multiplicities of the simple functors $S_{C_4,1,\FF}$ and $S_{C_4,1,\FF_-}$ in $\FFTD_{G,b}$. Let $P$ be a subgroup of $D$ isomorphic to $C_4$. Note that all subgroups of $D$ of order $4$ are $\calF$-isomorphic. The block $kC_G(P)e_P$ has a cyclic defect group $C_D(P)=P$ and so it follows that it has a unique isomorphism class of simple modules. Indetify $P$ with $L=C_4$. One has
\begin{align*}
\calP_{(P,e_P)}(C_4,1)= \Aut(C_4)=C_2
\end{align*}
and
\begin{align*}
[N_G(P,e_P) \backslash \calP_{(P,e_P)}(C_4,1)/ \Aut(C_4)]=[\id]\,.
\end{align*}
It follows that $\Aut(C_4)_{\overline{(C_4,e_{C_4},\id)}}=\Aut(C_4)$. These imply that
\begin{align*}
\Mult(S_{C_4,1,\FF},\FFTD_{G,b})=\dim_\FF\FF^{C_2}=1
\end{align*}
and that
\begin{align*}
\Mult(S_{C_4,1,\FF_-},\FFTD_{G,b})=\dim_\FF(\FF_-)^{C_2}=0\,.
\end{align*}
We finally consider the case $L=Q_8$. Since $\overline{\Aut_{\calF_b}(D)} \cong \Out(Q_8)\cong \Sym(3)$, Lemma~\ref{lem multiplicityoftop} implies that
\begin{align*}
\Mult(S_{Q_8,1,\FF},\FFTD_{G,b})=1 \quad \text{and}\quad \Mult(S_{Q_8,1,\FF_-},\FFTD_{G,b})=\Mult(S_{Q_8,1,V_2},\FFTD_{G,b})=0\,.
\end{align*}
By Theorem~\ref{thm multiplicityformula}, the multiplicity of $S_{Q_8,u,\FF}$ in $\FFTD_{G,b}$ is equal to the cardinality of the set
\begin{align*}
\left[N_G(D,e_D)\backslash \calP_{(D,e_D)}(Q_8,u)/\Aut(Q_8,u)\right]\,.
\end{align*}
One shows that $\calP_{(Q_8,e_{Q_8})}(Q_8,u)=\Aut(Q_8)$ and since $\Aut_{\calF_b}(Q_8)\cong \Aut(Q_8)$, it follows that
\begin{align*}
\Mult(S_{Q_8,u,\FF},\FFTD_{G,b})=1\,.
\end{align*}
This completes the proof. 
\end{proof}

\section{Blocks with $D_8$ defect groups}\label{sec dihedral}

Let $D_8$ be a dihedral group of order $8$. Since $\Aut(D_8)$ is a $2$-group and since $\Out(D)\cong C_2$, the functors $S_{D_8,1,\FF}$ and $S_{D_8,1,\FF_-}$ are the only simple functors, up to isomorphism, with parametrizing set $(L,u,V)$ with $L\cong D_8$, where $\FF$ and $\FF_-$ denote the trivial and the nontrivial simple $\FF\Out(D)\cong \FF C_2$-modules, respectively.

\begin{theorem}\label{thm dihedral}
Let $b$ be a block idempotent of $kG$ with defect groups isomorphic to $D_8$. Then one of the following occurs:

\smallskip
{\rm (i)} The fusion system of $b$ is the inner fusion system on $D_8$. In this case, $b$ is nilpotent and $(G,b)$ is functorially equivalent over $\FF$ to $(D_8,1)$. We have
\begin{align*}
\FFTD_{G,b}\cong S_{1,1,\FF}\oplus 3S_{C_2,1,\FF}\oplus S_{C_4,1,\FF}\oplus 2S_{V_4,1,\FF}\oplus 2S_{V_4,1,V_2}\oplus S_{D_8,1,\FF}\oplus S_{D_8,1,\FF_{-1}}\,.
\end{align*}

\smallskip
{\rm (ii)} The fusion system of $b$ is the non-inner non-simple fusion system on $D_8$. In this case, $(G,b)$ is functorially equivalent over $\FF$ to $(S_4,b_0)$, where $b_0$ is the principal $2$-block of $S_4$. We have
\begin{align*}
\FFTD_{G,b}\cong 2S_{1,1,\FF}\oplus 2S_{C_2,1,\FF}\oplus S_{C_4,1,\FF}\oplus 2S_{V_4,1,\FF}\oplus S_{V_4,1,V_2}\oplus  S_{V_4,u,\FF}\oplus S_{D_8,1,\FF}\oplus S_{D_8,1,\FF_{-1}}\,.
\end{align*}

\smallskip
{\rm (iii)} The fusion system of $b$ is the simple fusion system on $D_8$. In this case, $(G,b)$ is functorially equivalent over $\FF$ to $(PSL(3,2),b_0)$, where $b_0$ is the principal $2$-block of $PSL(3,2)$. We have
\begin{align*}
\FFTD_{G,b}\cong 3S_{1,1,\FF}\oplus S_{C_2,1,\FF}\oplus S_{C_4,1,\FF}\oplus 2S_{V_4,1,\FF}\oplus 2S_{V_4,u,\FF}\oplus S_{D_8,1,\FF}\oplus S_{D_8,1,\FF_{-1}}\,.
\end{align*}
\end{theorem}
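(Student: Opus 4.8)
The plan is to follow the same strategy as in the proof of Theorem~\ref{thm quaternion}: classify the possible fusion systems on $D_8$, invoke the classification of blocks up to splendid Rickard equivalence in each case to reduce to a small explicit model, and then compute the multiplicities of the relevant simple diagonal $p$-permutation functors using Theorems~\ref{thm multiplicityformula} and~\ref{thm multiplicityinrepresentable} together with Lemma~\ref{lem multiplicityoftop}. There are exactly three fusion systems on $D_8$ up to isomorphism: the inner (nilpotent) one, the fusion system of $S_4$ (in which one $V_4$ becomes $\calF$-radical with $\Aut_\calF(V_4)\cong\Sym(3)$ but the other $V_4$ and the $C_4$ stay rigid), and the fusion system of $PSL(3,2)\cong GL(3,2)$ (in which both copies of $V_4$ acquire $\Sym(3)$-automorphisms). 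By the theorem of Erdmann and the Cabanes--Picaronny / Linckelmann results on tame blocks — more precisely, by the work on blocks with dihedral defect groups (see \cite{linckelmann2018}) — a block with $D_8$ defect group and inner fusion system is nilpotent, hence functorially equivalent over $\FF$ to $(D_8,1)$ by \cite[Theorem~9.2]{BoucYilmaz2022}; a block with the $S_4$-fusion system is splendidly Rickard equivalent to $b_0(kS_4)$; and a block with the $GL(3,2)$-fusion system is splendidly Rickard equivalent to $b_0(k\,PSL(3,2))$. Since splendid Rickard (indeed $p$-permutation) equivalence implies functorial equivalence over any $R$, this reduces each case to a computation with the explicit group.

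The computation itself breaks into pieces indexed by the $\calF$-isomorphism classes of subgroups of $D_8$: the classes $\{1\}$, $C_2$ (there may be several $\calF$-classes of order-$2$ subgroups, and one must track which central/non-central involutions fuse), $C_4$, the two classes of Klein-four subgroups, and $D_8$ itself. For $S_{1,1,\FF}$ one uses $\Mult(S_{1,1,\FF},\FFTD_{G,b})=l(kGb)$, which is $1$, $2$, $3$ in the three cases respectively (one simple module when nilpotent; $l=2$ for $b_0(kS_4)$; $l=3$ for $b_0(k\,PSL(3,2))$). For $S_{C_2,1,\FF}$ one uses Theorem~\ref{thm multiplicityformula} with $P$ running over order-$2$ subgroups, noting $\FF\Proj(ke_PC_G(P),1)\cong\FF$ since $e_P$ is nilpotent with central defect, and counting $\calF$-classes of involutions weighted by whether $N_G(P,e_P)$ acts trivially; this gives $3$, $2$, $1$. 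For the $C_4$ subgroup, $\Aut(C_4)\cong C_2$ is a $2$-group and $C_4$ is always rigid in these fusion systems, so exactly as in the $Q_8$ case one gets $\Mult(S_{C_4,1,\FF})=1$ and $\Mult(S_{C_4,1,\FF_-})=0$. For the Klein-four subgroups one argues as in Section~\ref{sec Kleinfour}: a rigid $V_4$ (with $\Aut_\calF(V_4)=1$) contributes $S_{V_4,1,\FF}\oplus S_{V_4,1,\FF_-}\oplus 2S_{V_4,1,V_2}$ worth of multiplicity spread appropriately, while a $V_4$ with $\Aut_\calF(V_4)\cong\Sym(3)$ contributes $S_{V_4,1,\FF}$ plus $S_{V_4,u,\FF}$; here one must be careful that $\FF\Proj(ke_PC_G(P),1)\cong\FF$ still (the block $e_P$ on $C_G(P)$ has defect group $V_4$ but is Brauer-equivalent to a nilpotent or $A_4$-type block, and only the trivial $u$-fixed part of its projective module group survives), and combine with the $\Out(V_4)\cong\Sym(3)$-fixed-point computation of Lemma-type arguments. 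Finally for $L=D_8$ one applies Lemma~\ref{lem multiplicityoftop}: $\overline{\Aut_{\calF_b}(D_8)}$ is trivial in $\Out(D_8)\cong C_2$ in all three cases (no fusion system on $D_8$ induces the outer automorphism, since that would swap the two $V_4$'s and the $C_4$, which is impossible as $C_4\not\cong V_4$), so $\Mult(S_{D_8,1,V})=\dim V^{\{1\}}=\dim V$ for every simple $V$, giving $S_{D_8,1,\FF}\oplus S_{D_8,1,\FF_-}$ in each case.

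The main obstacle I anticipate is the bookkeeping for the Klein-four and order-$2$ subgroup contributions in cases (ii) and (iii): one must correctly enumerate the $\calF$-isomorphism classes of such subgroups (in the $S_4$-fusion system the two central-type and non-central involutions behave differently, and only one of the two $V_4$'s is $\calF$-radical; in the $GL(3,2)$-fusion system both $V_4$'s are radical and all non-central involutions fuse), and for each class correctly identify the local block $ke_PC_G(P)$, its number of simple modules, and the action of $N_G(P,e_P)/C_G(P)$ on the relevant sets $[\calP_{(P,e_P)}(L,u)]$ and on $\FF\Proj(ke_PC_G(P),u)$. Rather than do this abstractly, I would carry out the computation directly in $S_4$ and in $PSL(3,2)$ using Theorem~\ref{thm multiplicityinrepresentable} for the principal blocks wherever possible — reducing everything to counting $G$-orbits of pairs $(P,s)\in\calQ_{G,2}$ and computing $\FF N_G(P,s)$-fixed points of the relevant simple $\FF\Out(\tilde P,\tilde s)$-modules — and only fall back on Theorem~\ref{thm multiplicityformula} for the (non-principal, but Rickard-equivalent) general block. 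The arithmetic consistency check is that in every case the multiplicities of $S_{C_4,1,\FF}$, $S_{D_8,1,\FF}$ and $S_{D_8,1,\FF_-}$ are forced to be $1,1,1$, that $\Mult(S_{1,1,\FF})=l(kGb)$ matches the known value, and that the total "size" invariant $\sum_{(L,u,V)}\Mult\cdot\dim V\cdot(\text{something})$ is compatible with the three being genuinely inequivalent, which it is since already the multiplicity of $S_{1,1,\FF}$ distinguishes them.
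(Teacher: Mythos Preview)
Your reduction step is the problem. You assert that any block with the $S_4$-fusion (resp.\ the $PSL(3,2)$-fusion) on $D_8$ is splendidly Rickard equivalent to $b_0(kS_4)$ (resp.\ $b_0(k\,PSL(3,2))$), citing ``Erdmann and the Cabanes--Picaronny / Linckelmann results on tame blocks''. Those references do not give this: Erdmann's classification is at the level of basic algebras, not splendid equivalences, and there is no general splendid Rickard classification for $D_8$-blocks in \cite{linckelmann2018}. So your plan to compute only in the model groups and transport the answer back fails for lack of a bridge. The paper avoids this entirely: it computes the multiplicities for an \emph{arbitrary} $(G,b)$ directly from Theorem~\ref{thm multiplicityformula}, using only the fusion system $\calF_b$ together with Brauer's determination \cite{Brauer1974} of $l(kGb)$ and the easy fact that each local block $kC_G(P)e_P$ (for $P\in\{C_2,C_4,V_4,D_8\}$) has a unique simple module. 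Since every input to the formula depends only on $\calF_b$, the decomposition of $\FFTD_{G,b}$ depends only on $\calF_b$, and the functorial equivalence to the model pair is then a \emph{consequence}, not an assumption. Your second paragraph already contains most of this computation; you should make it the main argument and drop the reduction.

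Two smaller corrections in that computation. First, a Klein-four subgroup $J\le D_8$ never has $\Aut_{\calF}(J)=1$: already $\Aut_{\calF_D(D)}(J)=N_D(J)/C_D(J)\cong C_2$, and the paper's case split is between $\Aut_{\calF}(J)\cong C_2$ and $\Aut_{\calF}(J)\cong\Sym(3)$. Second, in the $C_2$ case the contribution of one such $J$ is $\dim_\FF V^{C_2}$, which is $1$ for $V=\FF$ and $V=V_2$ and $0$ for $V=\FF_-$; your claimed contribution ``$S_{V_4,1,\FF}\oplus S_{V_4,1,\FF_-}\oplus 2S_{V_4,1,V_2}$'' is wrong (there is no $\FF_-$ term, and the coefficient of $V_2$ is $1$ per such $J$). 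With two such $J$'s in case~(i) you recover $2S_{V_4,1,\FF}\oplus 2S_{V_4,1,V_2}$, and replacing one or both by the $\Sym(3)$ case gives the stated answers in~(ii) and~(iii).
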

\begin{proof}
Let $(D,e_D)$ be a maximal $b$-Brauer pair and for any $P\le D$, let $e_P$ denote the unique block idempotent of $kC_G(P)$ with $(P,e_P)\le (D,e_D)$. Let $\calF$ denote the fusion system of $b$ with respect to $(D,e_D)$. 

Note that up to isomorphism, there are three fusion systems on $D_8$. We denote by $\calF_{00}$ the inner fusion system; by $\calF_{01}$ the non-inner non-simple fusion system; by $\calF_{11}$ the simple fusion system. Note that $\calF_{00}\cong \calF_D(D)$, $\calF_{01}\cong \calF_D(\Sym(4))$ and $\calF_{11}\cong \calF_D(PSL(3,2))$.

By \cite{Brauer1974}, we have $l(kGb)=1$, if $\calF\cong \calF_{00}$; $l(kGb)=2$, if $\calF\cong \calF_{01}$; $l(kGb)=3$, if $\calF\cong \calF_{11}$.  This determines the multiplicity of $S_{1,1,\FF}$ in all cases.

Let $C_2$ be a subgroup of $D$ order $2$. Up to $G$-conjugation, we can assume that $C_2$ is fully $\calF$-centralized, and so the block $kC_G(C_2)e_{C_2}$ has a defect group $C_D(C_2)$ which is isomorphic to $D$ or $V_4$.  In both cases, one can show that $l(kC_G(C_2)e_{C_2})=1$. Therefore, Theorem~\ref{thm multiplicityformula} implies that the multiplicity of $S_{C_2,1,\FF}$ is equal to the number of $\calF$-isomorphism classes of objects isomorphic to $C_2$. Hence
\begin{align*}
\Mult\left(S_{C_2,1,\FF},\FFTD_{G,b}\right)=\begin{cases}
3, & \text{if } \calF\cong \calF_{00}\\
2, & \text{if } \calF\cong \calF_{01}\\
1, & \text{if } \calF\cong \calF_{11}\,.
\end{cases}
\end{align*}

Let $C_4$ be the cyclic subgroup of order $4$ of $D$. The block idempotent $kC_G(C_4)e_{C_4}$ has a central defect group $C_4$ and so $l(kC_G(C_4)e_{C_4})=1$. Moreover, in all cases one has $\Aut_{\calF}(C_4)\cong \Aut(C_4)\cong C_2$. Therefore, Theorem~\ref{thm multiplicityformula} implies that
\begin{align*}
\Mult\left(S_{C_4,1,\FF},\FFTD_{G,b}\right)=\dim_{\FF} \FF^{C_2}=1 \quad \text{and}\quad \Mult\left(S_{C_4,1,\FF_-},\FFTD_{G,b}\right)=\dim_{\FF} \FF_-^{C_2}=0\,.
\end{align*}

Let $X$ and $Y$ be the subgroups of $D$ isomorphic to $V_4$. Note that $X$ and $Y$ are not $\calF$-conjugate. We use the convention that $\Aut_{\calF}(X)\cong \Aut_{\calF}(Y)\cong C_2$ if $\calF\cong \calF_{00}$; $\Aut_{\calF}(X)\cong C_2$ and $\Aut_{\calF}(Y)\cong \Sym(3)$ if $\calF\cong\calF_{01}$; $\Aut_{\calF}(X)\cong \Aut_{\calF}(Y)\cong \Sym(3)$ if $\calF\cong \calF_{11}$. In all cases, the blocks $kC_G(X)e_X$ and $kC_G(Y)e_Y$ has central defect groups $X$ and $Y$, respectively, and hence $l(kC_G(X)e_X)=l(kC_G(Y)e_Y)=1$.

Now let $J\in\{X,Y\}$. First assume that $\Aut_{\calF}(J)\cong C_2$. Then, one has
\begin{align*}
[N_G(J,e_J)\backslash \calP_{(J,e_J)}(V_4,1) / \Aut(V_4)]=[\id]
\end{align*}
and
\begin{align*}
\Aut(V_4)_{\overline{(J,e_J,\id)}}=\{\phi\in \Aut(V_4)|\, \phi=i_g, g\in N_G(J,e_J)\} \cong C_2\,.
\end{align*}
It follows that the $\FF$-dimension of
\begin{align*}
\bigoplus_{\pi \in [\calP_{(J,e_J)}(V_4)]} \FF\Proj(ke_JC_G(J))\otimes_{\Aut(V_4)_{\overline{(J,e_J,\pi)}}} V = \FF\otimes_{C_2}V\cong V^{C_2}
\end{align*}
is equal to one for $V=\FF$ and $V=V_2$ and zero for $V=\FF_{-1}$. Moreover one has
\begin{align*}
\calP_{(J,e_J)}(V_4,u)=\{\phi \in\Aut(V_4): \phi i_u\phi^{-1}\in \Aut_{\calF}(J)\}=\emptyset
\end{align*}
since $ \phi i_u\phi^{-1}$ has order $3$. 

Next, suppose that $\Aut_{\calF}(J)\cong \Sym(3)$. We have
\begin{align*}
[N_G(J,e_J)\backslash \calP_{(J,e_J)}(V_4,1) / \Aut(V_4)]=[\id]
\end{align*}
and
\begin{align*}
\Aut(V_4)_{\overline{(J,e_J,\id)}}=\{\phi\in \Aut(V_4)|\, \phi=i_g, g\in N_G(J,e_J)\} \cong \Sym(3)\,.
\end{align*}
Therefore, the $\FF$-dimension of
\begin{align*}
\bigoplus_{\pi \in [\calP_{(J,e_J)}^G(V_4)]} \FF\Proj(ke_JC_G(J))\otimes_{\Aut(V_4)_{\overline{(J,e_J,\pi)}}} V = \FF\otimes_{\Sym(3)}V\cong V^{\Sym(3)}
\end{align*}
is non-zero only for $V=\FF$. Moreover,
\begin{align*}
\calP_{(J,e_J)}(V_4,u)=\{\phi \in\Aut(V_4)|\, \phi i_u\phi^{-1}\in \Aut_{\calF}(J)\}=\Aut(V_4)\cong S_3
\end{align*}
and
\begin{align*}
[N_G(J,e_J)\backslash \calP_{(J,e_J)}(V_4,u) / \Aut(V_4,u)]=[\id]\,.
\end{align*}
Thus, the $\FF$-dimension of
\begin{align*}
\bigoplus_{\pi \in [\calP_{(J,e_J)}^G(V_4,u)]} \FF\Proj(ke_JC_G(J),u)\otimes_{\Aut(V_4)_{\overline{(J,e_J,\pi)}}} \FF 
\end{align*}
is equal to one. These show that the multiplicities of $S_{V_4,1,\FF}$, $S_{V_4,1,\FF_-}$, $S_{V_4,1,V_2}$ and $S_{V_4,u,\FF}$ in $\FFTD_{G,b}$ are as claimed.

Finally, since in all cases we have $\Aut_{\calF}(D)\cong \Inn(D)$, Lemma~\ref{lem multiplicityoftop} implies that
\begin{align*}
\Mult\left(S_{D_8,1,\FF},\FFTD_{G,b}\right)=\Mult\left(S_{D_8,1,\FF_-},\FFTD_{G,b}\right)=1\,.
\end{align*}
This completes the proof.
\end{proof}

\section{Blocks with $C_2\times C_2\times C_2$ defect groups}\label{sec elementaryabelian}
Let $b$ be a block idempotent of $kG$ with defect groups isomorphic to $D=C_2\times C_2\times C_2$. Let $E$ be the inertial quotient of $b$. Then $E$ has order $1$, $3$, $7$ or $21$. By \cite[Theorem~1]{KessarKoshitaniLinckelmann2012} there is an isotypy, and hence by \cite{Yilmaz2022} a functorial equivalence over $\FF$, between $b$ and its Brauer correspondent block. So we can assume that $D$ is normal in $G$.  Therefore, $b$ has a source algebra $k(D\rtimes E)$. Since moreover, $E$ determines $l(b)$, it also determines the functorial equivalence class over $\FF$ of $b$.  Therefore we have the following.

\begin{theorem}\label{thm elementaryabelian}
Let $b$ be a block idempotent of $kG$ with defect group isomorphic to $C_2\times C_2\times C_2$ and let $E$ be the inertial quotient of $b$. Then one of the following occurs:

\smallskip
{\rm (i)} $|E|=1$ and $(G,b)$ is functorially equivalent over $\FF$ to $(C_2\times C_2\times C_2,1)$.

\smallskip
{\rm (ii)} $|E|=3$ and $(G,b)$ is functorially equivalent over $\FF$ to $(A_4\times C_2,1)$.

\smallskip
{\rm (iii)} $|E|=7$ and $(G,b)$ is functorially equivalent over $\FF$ to $(SL_2(8),b_0(SL_2(8))$.

\smallskip
{\rm (iv)} $|E|=21$ and $(G,b)$ is functorially equivalent over $\FF$ to $(J_1,b_0(J_1))$.

In particular, for blocks with $C_2\times C_2\times C_2$ defect groups, functorial equivalence classes over $\FF$ coincide with isotypy classes. 
\end{theorem}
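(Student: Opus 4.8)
The argument is essentially the one outlined just before the statement, made precise and completed with the identification of the four representatives. First I would invoke \cite[Theorem~1]{KessarKoshitaniLinckelmann2012} to obtain an isotypy between $b$ and its Brauer correspondent block $\tilde b$ in $N_G(D)$, and \cite{Yilmaz2022} to deduce that $(G,b)$ is functorially equivalent over $\FF$ to $(N_G(D),\tilde b)$; this reduces the problem to the case $D\trianglelefteq G$. When $D$ is normal, the structure theory of blocks with normal defect group (due to K\"ulshammer) gives a source algebra of the form $k_\gamma[D\rtimes E]$ for some $\gamma\in H^2(E,k^\times)$. Each possible inertial quotient $E\in\{1,C_3,C_7,F_{21}\}$ (with $F_{21}=C_7\rtimes C_3$) has trivial Schur multiplier, so $\gamma$ is trivial; hence $kGb$ is splendidly Morita equivalent to $k(D\rtimes E)$ and $(G,b)$ is functorially equivalent over $\FF$ to $(D\rtimes E,1)$.

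Next I would observe that the isomorphism type of $D\rtimes E$ is determined by $|E|$, since a faithful three-dimensional $\FF_2E$-module is unique up to isomorphism for each of these $E$: it is $\FF_2\oplus\FF_4$ for $C_3$, and $\FF_8$ with the natural action of $\FF_8^\times$, respectively of $\FF_8^\times\rtimes\Gal(\FF_8/\FF_2)$, for $C_7$ and $F_{21}$. Thus $D\rtimes E$ is $C_2\times C_2\times C_2$, $(C_2^2\rtimes C_3)\times C_2\cong A_4\times C_2$, $C_2^3\rtimes C_7$ or $C_2^3\rtimes F_{21}$ according to $|E|=1,3,7,21$. Finally, to reach the representatives in (iii) and (iv) I would note that the principal $2$-block of $SL_2(8)$ has defect group $C_2^3$ and inertial quotient of order $7$, while the principal $2$-block of $J_1$ has defect group $C_2^3$ and inertial quotient of order $21$; so by the first part of the argument $(SL_2(8),b_0(SL_2(8)))$ is functorially equivalent over $\FF$ to $(C_2^3\rtimes C_7,1)$ and $(J_1,b_0(J_1))$ to $(C_2^3\rtimes F_{21},1)$. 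This proves (i)--(iv).

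For the last assertion I would use that $\Mult(S_{1,1,\FF},\FFTD_{G,b})=l(kGb)$, and that for a block with normal defect group $D\rtimes E$ and trivial twist one has $l(kGb)=k(E)$; the four cases therefore have $l(kGb)$ equal to $1$, $3$, $7$ and $5$, so the four functorial equivalence classes are pairwise distinct. On the other hand \cite[Theorem~1]{KessarKoshitaniLinckelmann2012} gives exactly four isotypy classes of blocks with defect group $C_2\times C_2\times C_2$, one per value of $|E|$, and by \cite{Yilmaz2022} each isotypy class lies in a single functorial equivalence class; since both partitions have exactly four parts, they coincide.

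The step I expect to be the main obstacle is the normal-defect reduction: one must check carefully that K\"ulshammer's twist $\gamma$ vanishes --- equivalently that $H^2(F_{21},k^\times)=0$ --- and invoke the source-algebra description of blocks with normal defect group in exactly the form used here. The remaining points, including the identification of $C_2^3\rtimes C_7$ and $C_2^3\rtimes F_{21}$ with the Sylow normalizers in $SL_2(8)$ and $J_1$, are routine but deserve precise references.
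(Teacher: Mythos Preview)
Your argument is correct and follows the same route as the paper, whose proof is the short paragraph immediately preceding the theorem: reduce to normal defect group via \cite{KessarKoshitaniLinckelmann2012} and \cite{Yilmaz2022}, then use that the source algebra is $k(D\rtimes E)$ and that $|E|$ determines the class. You supply the details the paper leaves implicit---the vanishing of K\"ulshammer's twist (indeed $C_3$, $C_7$ and $F_{21}$ all have trivial Schur multiplier, so this is not an obstacle) and the separation of the four classes by $l(b)=k(E)\in\{1,3,7,5\}$---so in that sense your write-up is more complete than the paper's. One small imprecision worth tightening: for $E=C_7$ there are in fact \emph{two} non-isomorphic faithful $3$-dimensional $\FF_2C_7$-modules (the two Frobenius orbits of nontrivial characters), not one; however they are exchanged by an automorphism of $C_7$, so the resulting semidirect products are isomorphic and your conclusion stands. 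A cleaner phrasing is simply that subgroups of $\Aut(D)\cong GL_3(\FF_2)$ of each of the orders $1,3,7,21$ are unique up to conjugacy.
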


\section{Blocks with $C_2\times C_4$ defect groups}\label{sec otherabelian}

For completeness, we consider the blocks with defect groups $C_2\times C_4$. Since $\Aut(C_2\times C_4)\cong D_8$, the functors $S_{C_2\times C_4,1,V}$ are the only simple functors with parametrizing set $(L,u,V)$ with $L\cong C_2\times C_4$, where $V\in\{ \FF,\FF_1,\FF_2,\FF_3, V_2\}$ is a simple $\FF D_8$-module.

\begin{theorem}\label{thm abelinoforder8}
Let $b$ be a block idempotent of $kG$ with defect groups isomorphic to $C_2\times C_4$. Then $(G,b)$ is functorially equivalent over $\FF$ to $(C_2\times C_4,1)$. Moreover, one has
\begin{align*}
\Mult\left(S_{1,1,\FF}, \FFTD_{G,b}\right)=1, \quad \Mult\left(S_{C_2,1,\FF}, \FFTD_{G,b}\right)=3,
\end{align*}
\begin{align*}
\Mult\left(S_{C_4,1,V}, \FFTD_{G,b}\right)=2 \quad \text{for } V\in\{\FF,\FF_-\}\,,
\end{align*}
\begin{align*}
\Mult\left(S_{V_4,1,V}, \FFTD_{G,b}\right)=\dim_\FF V \quad \text{for } V\in\{\FF,\FF_-,V_2\}\,,
\end{align*}
\begin{align*}
\Mult\left(S_{C_2\times C_4,1,V}, \FFTD_{G,b}\right)=\dim_\FF V \quad \text{for } V\in\{\FF,\FF_1,\FF_2,\FF_3, V_2\}\,.
\end{align*}
\end{theorem}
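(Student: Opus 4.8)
The plan is to reduce to the case where the defect group $D \cong C_2 \times C_4$ is normal, and then compute all the multiplicities directly from Theorem~\ref{thm multiplicityinrepresentable}. First I would recall that blocks with abelian defect groups $C_2 \times C_4$ are well understood: by the classification of nilpotent-type behaviour for small abelian $2$-groups (using that $\Aut(C_2\times C_4) \cong D_8$ is a $2$-group, so the inertial quotient $E$ is trivial), every such block $b$ is nilpotent, hence by \cite[Theorem~9.2]{BoucYilmaz2022} the pair $(G,b)$ is functorially equivalent over $\FF$ to $(D,1) = (C_2\times C_4, 1)$. Alternatively, one invokes Puig's theorem that a nilpotent block has source algebra a matrix algebra over $kD$, together with the fact that an isotypy (or here even a splendid Morita equivalence) to $(D,1)$ yields a functorial equivalence over $\FF$ by \cite{Yilmaz2022}.

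Once $(G,b) \simeq_\FF (C_2\times C_4, 1)$, it remains to compute $\Mult(S_{L,u,V}, \FFTD_{C_2\times C_4})$ for each relevant triple. Since $D$ is a $p$-group, every $p'$-element $s$ of $N_D(P) = D$ is trivial, so $\calQ_{D,p}$ consists of the pairs $(P,1)$ with $P \le D$, each $D$-orbit being a singleton (as $D$ is abelian), and for each such pair $(\tilde P, \tilde s) = (P,1)$. Thus Theorem~\ref{thm multiplicityinrepresentable} gives that the multiplicity of $S_{L,1,V}$ equals $\sum_{P \le D,\, P \cong L} \dim_\FF V^{N_D(P)/?}$ — more precisely, for the representable functor of an abelian $p$-group the action of $N_D(P) = D$ on $\Out(P,1) = \Out(P)$ is trivial, so $V^{N_D(P)} = V$, and the multiplicity of $S_{L,1,V}$ is just $\dim_\FF V$ times the number of subgroups $P \le D$ with $P \cong L$. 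One then counts subgroups: $D = C_2 \times C_4$ has one subgroup $\cong 1$, three subgroups $\cong C_2$, two subgroups $\cong C_4$, one subgroup $\cong V_4$, and one subgroup ($D$ itself) $\cong C_2\times C_4$. This yields exactly the asserted multiplicities: $\Mult(S_{1,1,\FF}) = 1$, $\Mult(S_{C_2,1,\FF}) = 3$, $\Mult(S_{C_4,1,V}) = 2\dim_\FF V$ for $V \in \{\FF,\FF_-\}$ (and no $S_{C_4,u,V}$ with $u \ne 1$ occurs since those $\DD$-pairs do not embed), $\Mult(S_{V_4,1,V}) = \dim_\FF V$, and $\Mult(S_{C_2\times C_4,1,V}) = \dim_\FF V$. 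Note also that no simple functor $S_{L,u,V}$ with $u \ne 1$ appears, because an element $s$ witnessing such a $\DD$-pair would have to be a nontrivial $p'$-element of $N_G(P,e_P)/C_G(P)$, which is trivial here as $E = 1$.

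The main obstacle, such as it is, lies in the first step: justifying cleanly that every block with defect group $C_2 \times C_4$ is nilpotent (equivalently, functorially equivalent over $\FF$ to the trivial-source block on $D$). This is where one must be careful to cite the right structural input — that $\Aut(C_2\times C_4)$ is a $2$-group forces $E = 1$, and a block with abelian defect group and trivial inertial quotient is nilpotent by Broué–Puig — rather than attempt an independent argument. After that, the remaining work is the purely combinatorial subgroup count together with the trivial-action observation, both routine given Theorem~\ref{thm multiplicityinrepresentable}; I would present the subgroup lattice of $C_2\times C_4$ explicitly to make the bookkeeping transparent, and remark that the general shape of the answer — multiplicity $= (\text{number of subgroups of isomorphism type }L) \cdot \dim_\FF V$ — is the expected form for a nilpotent block on an abelian defect group.
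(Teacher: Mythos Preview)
Your proposal is correct and follows exactly the paper's own approach: the paper's proof is the two-line sketch ``$\Aut(C_2\times C_4)$ has no odd-order automorphism, so $b$ is nilpotent, hence functorially equivalent to $(C_2\times C_4,1)$ by \cite[Theorem~9.2]{BoucYilmaz2022}; the multiplicities follow from Theorem~\ref{thm multiplicityinrepresentable}'', and you have simply filled in the details of that sketch --- the nilpotency argument via $E\hookrightarrow\Aut(D)\cong D_8$, the observation that for an abelian $p$-group $D$ the formula of Theorem~\ref{thm multiplicityinrepresentable} collapses to $(\text{number of }P\le D\text{ with }P\cong L)\cdot\dim_\FF V$, and the explicit subgroup count in $C_2\times C_4$. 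There is nothing to correct.
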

\begin{proof}
Since $C_2\times C_4$ has no automorphism of odd order, the block $kGb$ is nilpotent and hence by \cite[Theorem~9.2]{BoucYilmaz2022}, $(G,b)$ is functorially equivalent over $\FF$ to $(C_2\times C_4,1)$. One can find the multiplicities using Theorem~\ref{thm multiplicityinrepresentable}.
\end{proof}

\begin{flushleft}
Deniz Y\i lmaz, Department of Mathematics, Bilkent University, 06800 Ankara, Turkey.\\
{\tt d.yilmaz@bilkent.edu.tr}
\end{flushleft}


\begin{thebibliography}{00}
	
\bibitem[B74]{Brauer1974}{\sc R.~Brauer:}
	On $2$-blocks with dihedral defect groups.
	{\sl Symposia Mathematica} {\bf 13} (1974), 367--393.

 
\bibitem[BY20]{BoucYilmaz2020} {\sc S.~Bouc, D.~Y{\i}lmaz:}
	Diagonal $p$-permutation functors.
	{\sl J. Algebra} {\bf 556} (2020), 1036--1056.
	
\bibitem[BY22]{BoucYilmaz2022}{\sc S.~Bouc, D.~Y\i lmaz:}
	Diagonal $p$-permutation functors, semisimplicity, and functorial equivalence of blocks.
	{\sl Adv.  Math.} {\bf 411} (2022), 108799.
	
\bibitem[CEKL12]{CEKL12}{\sc D.~Craven, C.~Eaton, R.~Kessar,M.~Linckelmann:}
	The structure of blocks with Klein four defect group.
	{\sl Math. Z.} {\bf 268} (2011), 441--476.


\bibitem[KKL12]{KessarKoshitaniLinckelmann2012}{\sc R.~Kessar, S.~Koshitani, M.~Linckelmann:}
	Conjectures of Alperin and Brou{\'e} for $2$-blocks with elemantary abelian defect groups of order $8$.
	{\sl J. Reine Angew. Math.} {\bf 671} (2012), 85--130.
	
\bibitem[L18]{linckelmann2018}{\sc M.~Linckelmann:}
	The block theory of finite group algebras. {V}ol. {II}.
	Cambridge University Press, Cambridge, 2018.


\bibitem[O75]{Olsson1975}{\sc J.B.~Olsson:}
	On $2$-blocks with quaternion and quasidihedral defect groups.
	{\sl J. Algebra} {\bf 36} (1975), 212--241.
	
\bibitem[R96]{Rickard1996} {\sc J.~Rickard:}
	Splendid equivalences: derived categories and permutation modules.
	{\sl Proc. Lond. Math. Soc.} {\bf 72} (1996), 331--358.

\bibitem[Ro98]{Rouquier98}{\sc R.~Rouquier:}
	The derived category of blocks with cyclic defect groups.
	in: S.~K{\"o}nig, A.~Zimmermann (Eds.), Derived equivalences for group rings, in: Lecture Notes in Mathematics, vol. 1685, Springer-Verlag, Berlin, 1998, pp. 199--220.

\bibitem[Y22]{Yilmaz2022}{\sc D.~Y\i lmaz:}
	Isotypic blocks are functorially equivalent.
    Submitted. arXiv:2212.02054
\end{thebibliography}
\end{document}